\renewcommand{\and}{\qquad\text{and}\qquad}
\definecolor{jade}{rgb}{0.10, 0.56, 0.42}
\definecolor{cerise}{rgb}{0.87, 0.19, 0.39}
\tikzset{>={latex[width=3mm,length=3mm]}}
\theoremstyle{definition}
\newtheorem*{thm*}{Theorem}
\newtheorem{thm}{Theorem}[section]
\newtheorem{definition}[thm]{Definition}
\newtheorem{lem}[thm]{Lemma}
\newtheorem{prop}[thm]{Proposition}
\theoremstyle{remark}
\newtheorem{rmk}[thm]{Remark}
\newtheorem{example}[thm]{Example}
\newcommand{\N}{\Z_{>0}}
\newcommand{\Z}{\mathbb{Z}}
\newcommand{\C}{\mathbb{C}}
\newcommand{\Cs}{\mathbb{C}-\{0\}}
\newcommand{\pp}{\mathbb{P}^1}
\newcommand{\Ad}{\mathrm{Ad}}
\newcommand{\orb}{\mathscr{O}}
\newcommand{\rmd}{r'}
\newcommand{\ftype}{\mathscr{A}}
\DeclareMathOperator{\M}{\mathcal{M}}
\DeclareMathOperator{\Cat}{\mathcal{C}}
\newcommand{\bfA}{\mathbf{A}}
\newcommand{\bfO}{\mathbf{O}}
\DeclareMathOperator{\fgl}{\mathfrak{gl}}
\DeclareMathOperator{\GL}{GL}
\newcommand{\fb}{\mathfrak{b}}
\newcommand{\fu}{\mathfrak{u}}
\newcommand{\iwa}{\mathfrak{i}}
\newcommand{\gc}{\nabla}
\DeclareMathOperator{\Res}{\mathrm{Res}}
\DeclareMathOperator{\Lie}{\mathrm{Lie}}
\DeclareMathOperator{\partmult}{\mathrm{m}}
\DeclareMathOperator{\partsupp}{\mathrm{Supp}}
\DeclareMathOperator{\partsum}{\mathrm{sum}}
\newcommand{\lasttwo}{k_{\mathrm{last}}}
\newcommand{\lastlil}{\ell_{\mathrm{last}}}
\newcommand{\gr}[1]{\Gamma^{#1}}
\newcommand{\graphfxn}{\mathrm{graph}}
\newcommand{\matrixfxn}{\mathrm{matrix}}
\newcommand{\graphpart}{\mathrm{Part}}
\newcommand{\lgr}[2]{\Gamma^{{#1},{#2}}}
\newcommand{\heia}[2]{\mathrm{ht}_{\gr{#1}}({#2})}
\newcommand{\lheia}[3]{\mathrm{ht}_{\lgr{#1}{#2}}({#3})}
\newcommand{\ordk}[1]{\mathrm{n}_{#1}}
\newcommand{\ordg}[1]{\mathrm{n}_{\gr{#1}}}
\newcommand{\lordg}[2]{\mathrm{n}_{\lgr{#1}{#2}}}
\newcommand{\LO}[1]{\mathsf{T}^{#1}}
\newcommand{\SH}[1]{\mathsf{S}^{#1}}
\newcommand{\thief}[1]{\mathit{s}^{#1}}
\newcommand{\thiefh}[2]{\heia{#1}{\thief{#2}}}
\newcommand{\lthiefh}[3]{\lheia{#1}{#2}{\thief{#3}}}
\newcommand{\vic}[1]{\mathit{c}^{#1}}
\newcommand{\lvic}[2]{\mathit{c}^{{#1},{#2}}}
\newcommand{\vich}[2]{\heia{#1}{\vic{#2}}}
\newcommand{\lvich}[4]{\lheia{#1}{#2}{\lvic{#3}{#4}}}
 \newcommand{\term}{\mathit{end}}
\newcommand{\la}[1]{\lambda_{#1}}
\newcommand{\m}[2]{\mu^{#1}_{#2}}
\newcommand{\Index}[1]{\mathrm{Dom}(\gr{#1})}
\newcommand{\lIndex}[2]{\mathrm{Dom}(\lgr{#1}{#2})}
\newcommand{\flo}[1]{\lfloor{#1}\rfloor}
\newcommand{\cei}[1]{\lceil{#1}\rceil}
\newcommand{\pro}[2]{P_{#1}({#2})}
\newcommand{\lro}[2]{P_{#1}({#2})}
\newcommand{\welldef}{1}
\newcommand{\modify}{2}
\newcommand{\multisetsdecrease}{3}
\newcommand{\propdw}{4}
\newcommand{\converge}{5}
\newcommand{\conditionx}{6}
\newcommand{\pointers}{7}
\newcommand{\ordinalgood}{8}
\newcommand{\heightgood}{9}
\newcommand{\tallimpliesshort}{10}
\newcommand{\lpro}[3]{Q_{#1}^{#2}({#3})}
\newcommand{\lwelldef}{1}
\newcommand{\lmodify}{2}
\newcommand{\sciongrows}{3}
\newcommand{\lpropdw}{4}
\newcommand{\lconverge}{5}
\newcommand{\lconditionx}{6}
\newcommand{\lpointers}{7}
\newcommand{\lordinalgood}{8}
\newcommand{\dzz}{\frac{dz}{z}}
\newcommand{\duu}{\frac{du}{u}}
\newcommand{\sdfrac}[2]{\mbox{\small$\displaystyle\frac{#1}{#2}$}}
\newcommand{\sdzz}{\sdfrac{dz}{z}}
\newcommand{\sduu}{\sdfrac{du}{u}}
\title[Explicit constructions of connections on the projective line]{Explicit constructions of connections on the projective line with a maximally ramified irregular singularity}
\author[N. Livesay]{Neal Livesay}
\address{Roux Institute, Northeastern University, Boston, MA}
\email{n.livesay@northeastern.edu}
\author[D.~S. Sage]{Daniel S. Sage}
\address{Department of Mathematics, Louisiana State University, Baton Rouge, LA.}
\email{sage@math.lsu.edu}
\author[B. Nguyen]{Bach Nguyen}
\address{Department of Mathematics, Xavier University of Louisiana, New Orleans, LA.}
\email{bnguye22@xula.edu}
\keywords{matrix completion problem, Deligne--Simpson problem, meromorphic connections, irregular singularities, nilpotent matrices}
  \subjclass[2020]{15A83, 34M50 (Primary); 14D05, 34M35 (Secondary)}
\begin{document}

\begin{abstract}
The Deligne--Simpson problem is an existence problem for connections with specified local behavior. Almost all previous work on this problem has restricted attention to connections with regular or unramified singularities.  Recently, the authors, together with Kulkarni and Matherne, formulated a version of the Deligne--Simpson problem where certain ramified singular points are allowed and solved it for the case of Coxeter connections, i.e., connections on the Riemann sphere with a maximally ramified singularity at zero and (possibly) an additional regular singular point at infinity. A certain matrix completion problem, which we call the Upper Nilpotent Completion Problem, plays a key role in our solution.  This problem was solved by Krupnik and Leibman, but their work does not provide a practical way of constructing explicit matrix completions. Accordingly, our previous work does not give explicit Coxeter connections with specified singularities.  In this paper, we provide a numerically stable and highly efficient algorithm for producing upper nilpotent completions of certain matrices that arise in the theory of Coxeter connections. Moreover, we show how the matrices generated by this algorithm can be used to provide explicit constructions of Coxeter connections with arbitrary unipotent monodromy in each case that such a connection exists.
\end{abstract}

\maketitle

\section{Introduction}
A fundamental question in the theory of meromorphic systems of linear differential equations on the Riemann sphere $\pp$ is whether there exists a differential equation with specified local behavior at its singularities. Consider the ODE $Y'+A(z)Y=0$, where $A(z)$ is an $n\times n$ matrix whose coefficients are rational functions over $\C$. This ODE determines a collection of local differential equations by expanding $A(z)$ as a Laurent series at each point in $\pp$. We view these local differential equations as ODEs with coefficients in the field of formal Laurent series in one variable. The local behavior of a meromorphic ODE is the collection of isomorphism classes of formal ODEs at the singular points.  One can then pose the natural question: given a finite subset of singular points in $\pp$ and a set of corresponding isomorphism classes of formal ODEs, does there exist a meromorphic ODE with this local behavior? The \emph{Deligne--Simpson problem} is a closely-related problem~\cite{sage2022meromorphic}.

Almost all previous work on the Deligne--Simpson problem has assumed that each singular point is either regular singular~\cite{CB03} (i.e., a simple pole) or ``unramified''~\cite{HirYam,Hiroe}.\footnote{A formal ODE is called \emph{unramified} if its Levelt--Turrittin form (an ODE version of Jordan canonical form) can be obtained without introducing roots of the local parameter.}  Recently, the authors, together with Kulkarni and Matherne, formulated a version of the Deligne--Simpson problem where "toral" ramified singular points are allowed and solved it for the case of  ODEs on the Riemann sphere with a maximally ramified singularity at zero and (possibly) an additional regular singular point at infinity~\cite{KLMNS}. This class of differential equations, which we call \emph{Coxeter ODEs}, includes important hypergeometric differential equations such as the Airy equation and the Kloosterman (or Frenkel--Gross) equation~\cite{KatzKloosterman,KatzExponential,FrGr}.  Explicitly, if we set  $\omega=\sum_{i=1}^{n-1}e_{i,i+1}+ze_{n,1}$---i.e., $\omega$ is the matrix with ones on the superdiagonal, $z$ in the lower-left entry, and zeroes elsewhere---then the Airy equation is  $Y'+z^{-2}\omega^{-1}Y=0$ and the Kloosterman equation is $Y'+z^{-1}\omega^{-1}Y=0$.

In this special case of the Deligne--Simpson problem, the specified local behavior at $0$ and $\infty$ is given by a polynomial $p(\omega^{-1})$ of degree $r$ with $\gcd(r,n)=1$ and an adjoint orbit (or simply a Jordan canonical form). The polynomial $p$ determines a ``formal type of slope $r/n$'' for the irregular singular point at $0$, while the adjoint orbit corresponds to the residue of the regular singular point at $\infty$.  For example, for the Kloosterman equation, $p(\omega^{-1})=\omega^{-1}$, $r=1$, and the Jordan canonical form is a single nilpotent Jordan block.  For the Airy equation, $p(\omega^{-1})=(\omega^{-1})^{n+1}$, $r=n+1$, and the adjoint orbit at $\infty$ is the zero orbit (meaning that $\infty$ is not a singular point).

A key discovery in \cite{KLMNS} is a relationship between the Deligne--Simpson problem for Coxeter ODEs and the following matrix completion problem, which we call the \emph{Upper Nilpotent Completion Problem}: 

\begin{quote}
\emph{Let $A$ be a nilpotent $n\times n$ matrix, let $\mu$ be the partition of $n$ with parts corresponding to the Jordan block sizes of $A$, and let $\la{}$ be a partition of $n$ dominating $\mu$. Show that there exists a strictly upper triangular matrix $X$ such that $A+X$ is nilpotent with Jordan block sizes $\la{}$.}
\end{quote}

To describe how this works, let us restrict to the simplest case where $0<r<n$, $p(\omega^{-1})=\omega^{-r}$, and the adjoint orbit is nilpotent with Jordan form corresponding to a partition $\lambda$ of $n$.  In this case, we showed in \cite{KLMNS} that there exists a connection with the specified local behavior if and only if $\lambda$ has at most $r$ parts, or equivalently if and only if the Jordan canonical form has at most $r$ blocks. For example, suppose that $A=N_r$, the matrix with ones in each entry of the $r$th diagonal and zeroes in all other entries. A (constructive) solution to the Upper Nilpotent Completion Problem with $A=N_r$ determines an explicit ODE with the given local behavior. Indeed, if $X$ is strictly upper triangular such that $N_r+X$ is nilpotent of type $\lambda$, then $Y'+z^{-1}(N_{n-r}^\top z^{-1} + N_r +X)Y=0$ has the desired properties.

The Upper Nilpotent Completion Problem was originally stated by Rodman and Shalom~\cite{RODMAN1992221}, and it was solved by Krupnik and Leibman~\cite{KrLe}.  This result, together with the solution of an extension of this problem to more general orbits by Krupnik~\cite{krupnik97}, plays a crucial role in \cite{KLMNS}. However, although Krupnik and Leibman describe an algorithm for constructing the desired nilpotent completion, the algorithm cannot be carried out effectively in practice to the best of our knowledge. Indeed, even in simple cases, the algorithm requires repeatedly transforming matrices into a form similar to Jordan canonical form, and we are not aware of any numerically stable way of carrying out their algorithm.  

In this paper, we give a numerically stable and highly efficient algorithm which constructs explicit (as well as binary and sparse) solutions to the Upper Nilpotent Completion Problem for the special case that $A=N_r$.  If $\lambda$ is any partition of $n$ with at most $r$ parts, the output of this algorithm is a strictly upper triangular binary matrix $X_\lambda$ such that $N_r+X_\lambda$ is nilpotent of type $\lambda$. As a result, the ODEs $Y'+z^{-1}(N_{n-r}^\top z^{-1} + N_r +X_\lambda)Y=0$ are explicit ``homogeneous'' Coxeter ODEs with a maximally ramified irregular singularity of slope $r/n$ at $0$ and unipotent monodromy of type $\lambda$ at $\infty$.

The Deligne--Simpson problem is often stated in the equivalent language of connections on trivializable vector bundles over $\pp$. The meromorphic ODE $Y'+A(z)Y=0$ with $A(z)$ an $n\times n$ matrix corresponds to the meromorphic connection $d+A(z)dz$ on a rank $n$ trivial vector bundle.  In the remainder of this paper, we will consider connections instead of ODEs.

\subsection*{Acknowledgements} The authors received support from the SQuaRE program of the American Institute of Mathematics and are grateful to AIM for its hospitality. The authors would like to thank the American Institute of Mathematics for its hospitality during an AIM SQuaRE where much of the work for this paper was completed.  N.L. received support from the Roux Institute and the Harold Alfond Foundation. D.S.S. received support from Simons Collaboration Grant 637367. B.N. received support from an AMS--Simons Travel Grant.

\section{Nilpotent orbits and the dominance order}

\subsection{Integer partitions}\label{partsection} We define a \emph{partition of a positive integer $n$} to be a multiset of positive integers that sum to $n$. Let $\partmult_{\la{}}(x)$ denote the multiplicity in a partition $\la{}$ of an integer $x$. We allow the multiplicity to take the value zero. Define the \emph{support $\partsupp(\la{})$ of a partition $\la{}$} to be the set of integers with positive multiplicity in $\la{}$; i.e., $\partsupp(\la{})=\{x\in\Z_{> 0}: \partmult_{\la{}}(x)>0 \}$. If $\partsupp(\la{})=\{\la{1},\la{2},\ldots,\la{k}\}$, then we represent the partition $\la{}$ as $\{\la{1}^{\partmult_{\la{}}(\la{1})},\la{2}^{\partmult_{\la{}}(\la{2})},\ldots,\la{k}^{\partmult_{\la{}}(\la{k})}\}$. Define $|\la{}|$ to be the sum $\sum_{x\in\partsupp(\la{})}\partmult_{\la{}}(x)$. Thus, if $\la{}$ is a partition of $n$, then $\sum_{x\in\partsupp(\la{})}\left(\sum_{i=1}^{\partmult_{\la{}}(x)}x\right)=n$; each of the $|\la{}|$ summands in the expansion of this double summation is called \emph{a part in $\la{}$}.

We find it convenient to sometimes view a partition $\la{}$ as a monotonically decreasing tuple $(\la{i})_{i=1}^{|\la{}|}$ of the parts in $\la{}$. The \emph{dominance order} (also known as the \emph{majorization order}, e.g., \cite{KrLe}) on the set of partitions of $n$ is the partial order $\succeq$ defined by $\la{}\succeq\m{}{}$ if and only if $|\la{}|\le |\m{}{}|$ and $\sum_{i=1}^j\la{i}\ge\sum_{i=1}^j\m{}{i}$ for all $j\in[1,|\la{}|]$.

\subsection{Nilpotent orbits}
If $R$ is a commutative ring with unity, then the general linear group $\GL_n(R)$ of degree $n$ over $R$ is the group of $n\times n$ matrices over $R$ with invertible determinant.  Its Lie algebra $\fgl_n(R)=\Lie(\GL_n(R))$ is the vector space of $n\times n$ matrices over $R$, equipped with the Lie bracket $[\cdot,\cdot]$ defined by $[X,Y]=XY-YX$.  We also view elements of $\fgl_n(R)$ as endomorphisms of $R^n$. The adjoint action of $\GL_n(R)$ on $\fgl_n(R)$ is defined by $\Ad_g(X)=gXg^{-1}$, for any $g\in\GL_n(R)$ and $X\in\fgl_n(R)$. In this paper, the term ``adjoint orbit'' always refers to a complex adjoint orbit. We denote the adjoint orbit of an element $X\in\fgl_n(\C)$ by $\orb_X$; i.e., $\orb_X = \{\Ad_g(X) \mid g\in\GL_n(\C)\}$.

An element $X\in\fgl_n(\C)$ is \emph{nilpotent} if $X^m=0$ for some $m>0$. The adjoint orbit $\orb_X$ of a nilpotent element $X\in\fgl_n(\C)$ is called a \emph{nilpotent orbit}. Any nilpotent orbit $\orb$ contains a block-diagonal representative in Jordan canonical form, which is unique up to permutations of its blocks (known as ``Jordan blocks''). Hence, there is a bijective correspondence between the set of nilpotent orbits in $\fgl_n(\C)$ and the set of partitions of $n$, where each nilpotent orbit corresponds with the multiset of its Jordan block sizes. We say that $\orb$---and each element of $\orb$---\emph{has type $\la{}$} if the sizes of the Jordan blocks for $\orb$ are given by the partition $\la{}$.

The set of nilpotent orbits are partially ordered: $\orb^1$ is less than or equal to $\orb^2$ if and only if $\orb^1$ is contained in the Zariski closure of $\orb^2$. This partial order is known as the \emph{closure order}. The bijection between nilpotent orbits and partitions described above defines a poset isomorphism when the sets are endowed with the closure order and the dominance order respectively~\cite{cm93}.

Next, we define two nilpotent matrices that arise in the study of Coxeter connections (see Section~\ref{sec:coxeter}).

\begin{definition}\label{def:Nr}Let $0<r<n$. Define $N_r\in\fgl_n(\C)$ (resp. $E_r\in\fgl_n(\C)$) to be the matrix with ones in each entry of the $r$th subdiagonal (resp. $(n-r)$th superdiagonal) and zeroes in all other entries.\end{definition}

\begin{figure}[ht!]
  \[N_1=\begin{bmatrix}0&0&0\\1&0&0\\0&1&0\end{bmatrix},\qquad E_1=\begin{bmatrix}0&0&1\\0&0&0\\0&0&0\end{bmatrix},\quad N_2=\begin{bmatrix}0&0&0\\0&0&0\\1&0&0\end{bmatrix},\quad 
  E_2=\begin{bmatrix}0&1&0\\0&0&1\\0&0&0\end{bmatrix}\]
\caption{The matrices $N_1$, $E_1$, $N_2$, and $E_2$ in $\fgl_3(\C)$.}
\end{figure} 

The following lemma shows that the orbit $\orb_{N_r}$ of $N_r$ is the minimal orbit with $r$ blocks and its closure consists of all orbits with at most $r$ blocks.  The proof is given in \cite[\S 2]{KLMNS}.

\begin{lem}\label{lem:Nr}
Suppose $0<r<n$.  Let $\rmd$ be the remainder when dividing $n$ by $r$.  Then:
\begin{enumerate}
\item $N_r$ is nilpotent and has type $\{\lceil n/r \rceil^{\rmd},\lfloor n/r \rfloor^{r-\rmd}\}$; and  
\item\label{lem:Nr:part2} a partition $\lambda$ of $n$ dominates $\{\lceil n/r \rceil^{\rmd},\lfloor n/r \rfloor^{r-\rmd}\}$ if and only if $|\lambda|\le r$.
\end{enumerate}
\end{lem}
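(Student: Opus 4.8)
The plan is to prove the two parts separately. For part (1), I would view $N_r$ as the endomorphism of $\C^n$ with $N_r e_i = e_{i+r}$ when $i+r\le n$ and $N_r e_i = 0$ otherwise, so that $N_r$ ``shifts indices up by $r$''. In particular $N_r^k e_i = e_{i+kr}$ (or $0$), so $N_r^{\lceil n/r\rceil}=0$ and $N_r$ is nilpotent. To compute its Jordan type, I would group the standard basis according to the residue of the index modulo $r$: for each $i$ with $1\le i\le r$ the vectors $e_i, e_{i+r}, e_{i+2r},\dots$ form a single Jordan string for $N_r$, and these $r$ strings partition the basis, so the parts of the Jordan type of $N_r$ are exactly the string lengths. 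A one-line count shows the string beginning at $e_i$ has length $\lfloor (n-i)/r\rfloor+1$; writing $n=qr+\rmd$ with $0\le \rmd<r$, this length equals $q+1=\lceil n/r\rceil$ for $1\le i\le\rmd$ and $q=\lfloor n/r\rfloor$ for $\rmd<i\le r$, giving the partition $\{\lceil n/r\rceil^{\rmd},\lfloor n/r\rfloor^{r-\rmd}\}$ (and $\rmd(q+1)+(r-\rmd)q=n$, as it must).

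For part (2), write $\mu=\{\lceil n/r\rceil^{\rmd},\lfloor n/r\rfloor^{r-\rmd}\}$; this partition has exactly $r$ parts, so $|\mu|=r$. One direction is immediate: if $\lambda\succeq\mu$ then, by definition of the dominance order, $|\lambda|\le|\mu|=r$. For the converse, assume $|\lambda|\le r$. Then the clause $|\lambda|\le|\mu|$ of the definition already holds, so it remains to check the partial-sum inequalities $\sum_{i=1}^{j}\lambda_i\ge\sum_{i=1}^{j}\mu_i$. I would argue this after passing to conjugate partitions, using the standard fact that conjugation of Young diagrams reverses the dominance order on partitions of a fixed integer (see, e.g., \cite{cm93}): thus $\lambda\succeq\mu$ is equivalent to $\mu^{\top}\succeq\lambda^{\top}$, and since the partial sums of any partition of $n$ are eventually constant equal to $n$, the range of $j$ in the definition is irrelevant. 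Now the hypothesis $|\lambda|\le r$ says precisely that every part of $\lambda^{\top}$ is at most $r$, while a direct computation gives $\mu^{\top}=\{r^{\lfloor n/r\rfloor},\rmd\}$ if $\rmd>0$ and $\mu^{\top}=\{r^{n/r}\}$ if $\rmd=0$, so $\sum_{i=1}^{j}(\mu^{\top})_i=\min(jr,n)$ for all $j$. Since the parts of $\lambda^{\top}$ are each $\le r$ and sum to $n$, we get $\sum_{i=1}^{j}(\lambda^{\top})_i\le\min(jr,n)=\sum_{i=1}^{j}(\mu^{\top})_i$ for every $j$; this is exactly $\mu^{\top}\succeq\lambda^{\top}$, hence $\lambda\succeq\mu$.

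The computations in part (1) (tracking ceilings, floors, and the degenerate case $\rmd=0$) and the conjugate computation in part (2) are routine, so I do not expect a serious obstacle. The one point that carries the content of part (\ref{lem:Nr:part2}) is recognizing that $\mu^{\top}$ is the dominance-largest partition of $n$ all of whose parts are at most $r$ --- equivalently, that $\mu$ itself is the dominance-smallest partition of $n$ with at most $r$ parts --- and the passage to conjugates is what makes this visible.
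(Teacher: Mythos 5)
Your proof is correct. Note that the paper itself contains no argument for this lemma --- it simply defers to \cite[\S 2]{KLMNS} --- so there is no in-paper proof to compare against; but your argument is the standard (and, as far as I can tell, the intended) one: for part (1), the decomposition of the standard basis into the $r$ Jordan strings $e_i\mapsto e_{i+r}\mapsto\cdots$ indexed by residues mod $r$, with the length count $\lfloor (n-i)/r\rfloor+1$ correctly yielding $\{\lceil n/r\rceil^{\rmd},\lfloor n/r\rfloor^{r-\rmd}\}$ in both the $\rmd>0$ and $\rmd=0$ cases; and for part (2), passing to conjugates and observing that $\mu^{\top}$ has partial sums $\min(jr,n)$, which majorize the partial sums of any $\lambda^{\top}$ with all parts at most $r$. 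The only point worth flagging is that the paper's definition of $\succeq$ carries the extra clause $|\lambda|\le|\mu|$ and restricts the partial-sum inequalities to $j\in[1,|\lambda|]$, whereas the conjugation-reverses-dominance fact is usually stated for the standard definition (all $j$, zero-padding); you correctly note that for partitions of the same $n$ these formulations agree, so the reduction is legitimate.
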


\section{Coxeter connections and the Deligne--Simpson Problem}\label{sec:coxeter}

\subsection{Connections with maximally ramified singularities}
Let $V$ be a rank $n$ trivializable vector bundle over the Riemann sphere $\pp$ endowed with a meromorphic connection $\gc$. After fixing a trivialization, the connection has the form $d+M(z)\dzz$, where $M(z)\in\fgl_n(\C(z))$ is an $n\times n$ matrix of rational functions.  The local behavior at $y\in\pp$ is determined by the associated formal connection at $y$.  Explicitly, this is obtained by expanding $M(z)$ as a Laurent series in term of a local parameter $u$ at $y$: $z-y$ if $y$ is finite and $z^{-1}$ otherwise (i.e., if $y=\infty$).  Changing the trivialization of the global or formal vector bundle induces an action on the connection matrix called \emph{gauge change}. If $d+A(u)\dzz$ is a formal connection with $A(u)\in\fgl_n(\C(\!(u)\!))$, then  $g\in\GL_n(\C(\!(u)\!))$ acts on the connection operator via $g\cdot (d+A(u))\duu=d+(gA(u)g^{-1})\duu-(dg)g^{-1}$.  In the global case, $g\in\GL_n(\C)$, so the nonlinear gauge term is zero.

Let $y$ be a singular point of the connection, and denote the induced formal connection at $y$ by \[d+(M_{-r}u^{-r}+M_{-r+1}u^{-r+1}+\cdots)\sduu,\] where $M_i\in \fgl_n(\C)$ and $r\ge 0$. When the leading term $M_{-r}$ is well-behaved, it gives important information about the connection. For example, if $M_{-r}$ is not nilpotent, then the integer $r$ is an invariant of the connection known as the \emph{slope} at $y$. The slope can roughly be viewed as a measure of the irregularity of the singularity; a singular point $y$ is \emph{regular singular} if the slope is zero, and irregular otherwise. If $M_{-r}$ is regular semisimple (i.e., diagonalizable with distinct eigenvalues), then a classical result due to Wasow~\cite{Was} states that the connection is locally gauge equivalent to a connection of the form 
\[d+(D_{-r}u^{-r}+D_{-r+1}u^{-r+1}+\cdots+D_0)\sduu,\]
where $D_i\in\fgl_n(\C)$ are diagonal and $D_{-r}$ is regular semisimple. Diagonal representatives of this form are called \emph{regular unramified formal types}.\footnote{Any formal connection can be put into upper triangular form after passing to a finite extension of $\C(\!(u)\!)$. It is called \emph{unramified} if this can be done over the ground field and \emph{ramified} otherwise.}

However, there are many singularities for which the leading term of this expansion is nilpotent, regardless of the choice of formal trivialization.  Indeed, the slope at $y$ can be any nonnegative rational number with denominator at most $n$, and if this slope is not an integer, the leading term will always be nilpotent.  For example, the Frenkel--Gross connection~\cite{FrGr} \[d+(E_1z^{-1}+N_1)\sdzz\]  has two singular points, a regular singular point at $\infty$ and an irregular singular point at $0$ with slope $1/n$.  This is the smallest possible slope of an irregular singularity~\cite{KS1}.   The Frenkel--Gross connection is \emph{maximally ramified} at $y=0$, meaning that the slope has the largest possible denominator (i.e., $n$) when reduced to lowest terms.

To better understand formal connections with nonintegral slope, Bremer and Sage developed a generalization of leading terms known as \emph{fundamental strata}~\cite{BrSa1,BrSa2,BrSa3,BrSa5}. Every formal connection  ``contains'' a fundamental stratum \cite[Lemma 4.5]{BrSa1}, and the slope of a connection is equal to the ``depth'' of any fundamental stratum it contains (\cite[Theorem 4.10]{BrSa1},\cite[Theorem 2.14]{BrSa3}). Regular semisimple leading terms are generalized by \emph{regular strata}, and a generalization of Wasow's result states that any connection containing a regular stratum can be ``diagonalized'' into a \emph{ramified formal type}~\cite{BrSa1,BrSa5}. In particular, if a singularity has slope equal to $r/n$ for some $r$ relatively prime with $n$---i.e., if the singularity is maximally ramified---then the connection is locally gauge equivalent to a connection of the form $d+p(\omega^{-1})\dzz$, where $p$ is a polynomial of degree $r$ and $\omega^{-1}=E_1z^{-1}+N_1$ \cite[Theorem 4.4]{KLMNS}. The one-forms $p(\omega^{-1})\dzz$ are the ``maximally ramified formal types of slope $r/n$''.

\subsection{Moduli spaces and Coxeter connections}
An important problem in the study of meromorphic connections is the extent to which  a globally defined connection is determined by its local behavior. Here, ``local behavior'' consists of:
\begin{itemize}
\item a nonempty, finite set of irregular singular points $\{x_i\}_i$;
\item a corresponding collection $\bfA=(\ftype_i)_i$ of formal types\footnote{A formal type may be viewed as a rational canonical form for a formal connection. See \cite{sage2022meromorphic} for more details.} $\ftype_i$;
\item  a finite set of regular singularities $\{y_j\}_j$ disjoint from $\{x_i\}_i$; and
\item a corresponding collection $\bfO=(\orb_j)_j$ of ``nonresonant''\footnote{An adjoint orbit is called \emph{nonresonant} if no two eigenvalues differ by a nonzero integer.}  adjoint orbits $\orb_j$.
\end{itemize}

We consider the category $\Cat(\bfA,\bfO)$ of connections $\gc$ defined over the rank $n$, trivializable vector bundle $V$ on $\pp$ satisfying the following properties:
\begin{itemize}
\item $\gc$ has an irregular singularity at each $x_i$, a regular singularity at each $y_j$, and no other singularities;
\item at each $x_i$, $\gc$ is ``framable'' (see, e.g., \cite{Sa17}) and has formal type $A_i$; and
\item at each $y_j$, $\gc$ has residue in $\orb_j$.
\end{itemize}

Boalch gave a general construction of the corresponding moduli space $\M(\bfA,\bfO)$ in the case that each of the formal types are diagonal with regular semisimple leading term~\cite[Proposition 2.1]{Boa}. This construction was extended to include ``toral'' ramified formal types by Bremer and Sage~\cite[Theorem 5.6]{BrSa1}. 

Here, we give a relatively simple version of this construction for the important special case of connections on $\pp$ with a maximally ramified singularity at $0$, (possibly) a regular singularity at $\infty$, and no other singularities~\cite[Proposition 5.3]{KLMNS}. Such connections are called \emph{Coxeter connections} (for reasons discussed in~\cite{KS2,KLMNS}); they include both the Frenkel--Gross and Airy connections.

Let $B\subseteq\GL_n(\C)$ be the subgroup of upper triangular matrices.  Its Lie algebra $\fb$ is the set of upper triangular matrices in $\fgl_n(\C)$. Let $I$ denote the ``Iwahori subgroup'' of $\GL_n(\C[\![z]\!])$ corresponding to $B$, and let $\iwa$ denote its Lie algebra. Explicitly, this means that $I$ (resp. $\iwa$) is the preimage of $B$ (resp. $\fb$) via the map $\GL_n(\C[\![z]\!])\rightarrow\GL_n(\C)$ (resp. $\fgl_n(\C[\![z]\!])\rightarrow\fgl_n(\C)$) induced by the ``evaluation at zero'' map $z\mapsto 0$. 

Recall that the residue of a one-form $(\sum_{i=-r}^\infty M_iz^i)\dzz$ with $r\ge 0$ is $M_0$.

\begin{prop}[Proposition 5.3 in \cite{KLMNS}]
\label{thm:modspace}
Let $\ftype$ be a maximally ramified formal type, and let $\orb$ be a nonresonant adjoint orbit. Then
\[\M(\ftype,\orb) \cong
  \{(\alpha,Y) \mid \alpha\in\fgl_n(\C[z^{-1}])\dzz, Y\in\orb,
  \alpha+\iwa\subseteq\Ad(I)(\ftype)+\iwa,\text{ and
  }\Res(\alpha)+Y=0\}/B .\]
\end{prop}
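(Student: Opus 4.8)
The plan is to identify the moduli space $\M(\ftype,\orb)$ with a quotient built from the explicit data of connections on the trivial bundle that are compatible with the prescribed local behavior, and then to cut this description down to the stated form using the Iwahori-level structure of a maximally ramified formal type. First I would recall from \cite{BrSa1,KLMNS} that because $\ftype$ is a maximally ramified (hence toral) formal type of slope $r/n$, the associated ``fundamental stratum'' at $0$ is a regular stratum whose centralizer torus is governed by the Iwahori $\Iwa$; this is precisely the input that lets one realize framable connections with formal type $\ftype$ as $\Ad(\Iwa)$-orbits of a fixed normal form modulo $\iwa$. Concretely, a connection $d+M(z)\sdzz$ on $V$ with the required singularities is determined by a matrix $M(z)$ that is polynomial in $z^{-1}$ (to force regularity away from $0$ and $\infty$, with only a simple pole at $\infty$), so $M(z)\sdzz = \alpha$ with $\alpha\in\fgl_n(\C[z^{-1}])\sdzz$; the residue at $\infty$ is then $-\Res_0(\alpha)=-M_0$, which must lie in $\orb$, giving the condition $\Res(\alpha)+Y=0$ with $Y\in\orb$.

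The key step is to show that ``$d+\alpha$ has formal type $\ftype$ at $0$, in the framable sense'' is equivalent to the single closed condition $\alpha+\iwa\subseteq\Ad(\Iwa)(\ftype)+\iwa$. In one direction, if $\alpha$ is $\Iwa$-gauge equivalent (as a \emph{formal} connection, i.e.\ after completing at $z=0$) to $\ftype$, then writing the gauge transformation $g\in\Iwa$ one has $g\cdot(d+\ftype) = d+\Ad(g)(\ftype) - (dg)g^{-1}$; since $g\in\Iwa$ the nonlinear term $(dg)g^{-1}$ lies in $\iwa$ (it is $z\,\frac{d}{dz}$-type with values in $\fgl_n(z\C[\![z]\!])\subseteq\iwa$, as $g$ reduces to an element of $\B$ mod $z$), so $\alpha \in \Ad(g)(\ftype)+\iwa \subseteq \Ad(\Iwa)(\ftype)+\iwa$. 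Conversely, given $\alpha+\iwa\subseteq\Ad(\Iwa)(\ftype)+\iwa$ one picks $g\in\Iwa$ with $\alpha-\Ad(g)(\ftype)\in\iwa$ and then runs the Bremer--Sage diagonalization/normalization argument for regular strata (\cite[\S4]{BrSa1}, \cite[Thm.~4.4]{KLMNS}): the residual $\iwa$-valued discrepancy is killed by a further gauge transformation in $\Iwa$ because the relevant cohomology vanishes in the regular-stratum (toral) situation, so $d+\alpha$ is formally gauge equivalent to $d+\ftype$, and framability is automatic since the formal type is toral. Finally I would check that the residual gauge freedom, after these reductions, is exactly the constant group $\B = \Iwa\cap\GL_n(\C)$: global trivialization changes preserving the trivial bundle and the normal form at $0$ are constant elements of $\Iwa$, i.e.\ elements of $\B$, and they act on the pair $(\alpha,Y)$ by simultaneous adjoint action — which visibly preserves all three conditions. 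Hence $\M(\ftype,\orb)$ is the displayed quotient by $\B$.

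I expect the main obstacle to be the careful verification of the equivalence in the key step — in particular, showing that the \emph{a priori} local/formal condition ``$d+\alpha\cong d+\ftype$ over $\C(\!(z)\!)$'' can be detected at the finite Iwahori level by the inclusion $\alpha+\iwa\subseteq\Ad(\Iwa)(\ftype)+\iwa$, and that the leftover $\iwa$-term can always be gauged away without leaving $\Iwa$. This is where the hypothesis that $\ftype$ is maximally ramified (so that the fundamental stratum is regular and the stabilizer torus is $\Iwa$-adapted) is essential, and it is the content one imports from \cite{BrSa1,BrSa5} and \cite[\S4--5]{KLMNS}; the rest (the shape of $\alpha$, the residue condition, and the identification of the quotient group as $\B$) is a routine unwinding of the definitions of the category $\Cat(\ftype,\orb)$ and of gauge change on the trivial bundle.
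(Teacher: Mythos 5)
This proposition is not proved in the paper at all: it is quoted verbatim as Proposition~5.3 of \cite{KLMNS}, which in turn specializes the Bremer--Sage construction of moduli spaces of framable connections with toral ramified formal types (\cite[Theorem 5.6]{BrSa1}) to the Coxeter setting. So there is no internal argument to compare yours against; what can be assessed is whether your sketch is a faithful reconstruction of the cited proof.

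Its overall shape is right (reduce to connections on the trivial bundle with matrix in $\fgl_n(\C[z^{-1}])\sdzz$, encode the behavior at $0$ by the Iwahori-level inclusion, encode the behavior at $\infty$ by the residue condition, identify the leftover symmetry as $B=I\cap\GL_n(\C)$), but the two load-bearing steps are asserted rather than argued. First, the reduction to $\alpha\in\fgl_n(\C[z^{-1}])\sdzz$ is not just ``forcing regularity away from $0$ and $\infty$'': a connection on the trivial bundle with no finite singularities away from $0$ a priori has matrix in $\fgl_n(\C[z,z^{-1}])$, and arranging a genuine first-order pole at $\infty$ whose residue lies in $\orb$ (so that $\Res(\alpha)+Y=0$ is exactly the residue theorem) is where the nonresonance hypothesis on $\orb$ enters; you never use it. Second, ``framability is automatic since the formal type is toral'' is backwards: framability is a hypothesis built into the definition of $\Cat(\bfA,\bfO)$, and the inclusion $\alpha+\iwa\subseteq\Ad(I)(\ftype)+\iwa$ is precisely its algebraic expression --- in the actual construction it arises as a moment-map condition in a symplectic reduction of ``extended orbits,'' with the quotient by $B$ coming from the Iwahori-level structure, rather than from gauge-fixing a normal form and computing a residual stabilizer. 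Your ``key step,'' in particular gauging away the $\iwa$-valued discrepancy without leaving $I$, is the entire technical content of \cite[Theorem 4.4]{KLMNS} and the Bremer--Sage regular strata theory; deferring it is legitimate in an outline, but it means the proposal is an accurate map of where the proof lives rather than a proof.
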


\subsection{The Deligne--Simpson problem}
Very little is known about these moduli spaces when ramified singular points are allowed.  Indeed, it is not even known when these moduli spaces are nonempty; this is the \emph{Deligne--Simpson problem}.  The original Deligne--Simpson problem, involving connections with only regular singular points, was solved by Crawley--Boevey in 2003~\cite{CB03}. The unramified version, where unramified singular points are allowed, was solved more recently by Hiroe~\cite{Hiroe}. Recently, we solved the Deligne--Simpson problem for  Coxeter connections~\cite[Theorem 5.4]{KLMNS} together with Kulkarni and Matherne. Theorem~\ref{thm:ds} is a restatement of this result for the special case of \emph{homogeneous Coxeter connections}, i.e., Coxeter connections where the formal type $\ftype$ is a monomial. Without loss of generality, we can assume that $\ftype=\omega^{-r}\dzz$ (with $\gcd(r,n)=1$)~\cite{KS2}.

\begin{thm}\label{thm:ds}
Let $r$ and $n$ be relatively prime positive integers, and let $\orb_{\la{}}\subseteq\fgl_n(\C)$ be the nilpotent orbit of type $\la{}$. Then $\M(\omega^{-r}\dzz,\orb_{\la{}})$ is nonempty if and only if $|\la{}|\le r$.  In particular, this is always the case if $r>n$.
\end{thm}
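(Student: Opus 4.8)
The statement to prove is Theorem~\ref{thm:ds}: $\M(\omega^{-r}\dzz,\orb_{\la{}})$ is nonempty if and only if $|\la{}|\le r$. Since the excerpt introduces the Upper Nilpotent Completion Problem and Lemma~\ref{lem:Nr} precisely for this purpose, the plan is to translate the moduli-space description of Proposition~\ref{thm:modspace} into the matrix completion language and then invoke Lemma~\ref{lem:Nr} together with the known solution of the completion problem (Krupnik--Leibman). First I would unwind Proposition~\ref{thm:modspace} in the homogeneous case $\ftype=\omega^{-r}\dzz$. A point of $\M(\omega^{-r}\dzz,\orb_{\la{}})$ is a pair $(\alpha,Y)$ with $\alpha\in\fgl_n(\C[z^{-1}])\dzz$, $Y\in\orb_{\la{}}$, the congruence $\alpha+\iwa\subseteq\Ad(I)(\omega^{-r}\dzz)+\iwa$, and $\Res(\alpha)+Y=0$. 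The congruence condition pins down the polar part of $\alpha$ up to an Iwahori gauge: one shows that any such $\alpha$ is $I$-gauge-equivalent (without changing the residue constraint, since the gauge term is in $\iwa$ hence has zero contribution modulo the relevant filtration) to one whose negative-degree part is exactly that of $\omega^{-r}\dzz$, and whose residue $\Res(\alpha)=M_0$ is an arbitrary element of $\fb$ — i.e.\ an arbitrary upper triangular matrix — added to a fixed representative. Concretely, using $\omega^{-1}=E_1z^{-1}+N_1$ and the identification $N_r\sim N_1^{\,r}$, $E_r^\top\sim$ the strictly-lower part coming from $(E_1z^{-1})^r$-type terms, the residue takes the form $N_r+X$ for $X\in\fb$ strictly upper triangular, modulo the residual $B$-action. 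Thus the condition $\Res(\alpha)+Y=0$ with $Y\in\orb_{\la{}}$ becomes: \emph{there exists strictly upper triangular $X$ with $N_r+X$ nilpotent of type $\la{}$} (up to sign and conjugation, which do not affect the orbit).

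**Reduction to completion.** With this dictionary in hand, $\M(\omega^{-r}\dzz,\orb_{\la{}})\neq\emptyset$ is equivalent to the solvability of the Upper Nilpotent Completion Problem with $A=N_r$ and prescribed Jordan type $\la{}$. By Lemma~\ref{lem:Nr}(1), $N_r$ has type $\{\lceil n/r\rceil^{\rmd},\lfloor n/r\rfloor^{r-\rmd}\}$, and by Lemma~\ref{lem:Nr}(2) a partition $\la{}$ dominates this type precisely when $|\la{}|\le r$. The Krupnik--Leibman theorem~\cite{KrLe} asserts that the Upper Nilpotent Completion Problem is solvable exactly when $\la{}$ dominates the Jordan type of $A$. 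Combining these gives: a solution $X$ exists iff $\la{}\succeq\{\lceil n/r\rceil^{\rmd},\lfloor n/r\rfloor^{r-\rmd}\}$ iff $|\la{}|\le r$. This proves both directions. The final sentence ``in particular this is always the case if $r>n$'' is immediate: if $r>n$ then $|\la{}|\le n<r$ for every partition $\la{}$ of $n$ (here one uses that the formal type $\omega^{-r}\dzz$ still makes sense — $\omega^{-r}$ being a polynomial of degree $r$ in $\omega^{-1}$ — and the analogous but slightly different moduli description from \cite{KLMNS} for $r>n$, which reduces the residue condition to the same completion problem with $A$ an orbit still dominated by everything).

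**Where the work is.** The genuinely nontrivial step is the first one: verifying that the Iwahori-congruence condition $\alpha+\iwa\subseteq\Ad(I)(\omega^{-r}\dzz)+\iwa$, combined with the $B$-quotient, exactly cuts out residues of the form ``$N_r\ +$ strictly upper triangular'' and nothing more or less. One direction (every such residue occurs) needs an explicit check that $N_{n-r}^\top z^{-1}+N_r+X$ lies in the prescribed Iwahori coset — this is where the identity $\omega^{-r} = (E_1 z^{-1}+N_1)^r = N_{n-r}^\top z^{-1} + N_r$ modulo $\iwa$ is used, expanding the $r$-th power and sorting terms by whether they carry a factor of $z^{-1}$. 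The other direction (nothing else occurs) requires showing that the Iwahori gauge action is transitive enough to normalize the negative part of $\alpha$ to exactly $N_{n-r}^\top z^{-1}$ while leaving the residue free in $\fb$ up to $B$; this is essentially the content of the Bremer--Sage/Wasow-type normal form for toral strata, specialized to this maximally ramified case, and is proved in \cite{KLMNS}. Since the excerpt allows us to cite \cite{KLMNS} for Proposition~\ref{thm:modspace} and the surrounding structure theory, the honest summary is that Theorem~\ref{thm:ds} follows by assembling Proposition~\ref{thm:modspace}, the explicit computation of $\omega^{-r}$ modulo $\iwa$, Lemma~\ref{lem:Nr}, and the Krupnik--Leibman solvability criterion; the main obstacle — the normal-form reduction — is the one piece that is imported rather than reproved here.
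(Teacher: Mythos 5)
Your proposal matches the paper's treatment: Theorem~\ref{thm:ds} is presented there as a restatement of \cite[Theorem 5.4]{KLMNS}, and the route you describe --- unwinding Proposition~\ref{thm:modspace} to reduce nonemptiness to the Upper Nilpotent Completion Problem for $A=N_r$ (via the identity $\omega^{-r}=E_rz^{-1}+N_r$), then combining Lemma~\ref{lem:Nr} with the Krupnik--Leibman theorem --- is exactly the argument the paper sketches around the theorem, with the normal-form reduction imported from \cite{KLMNS} just as you say. One small caution: Krupnik--Leibman prove only the existence (``if'') direction of the completion problem, so the ``only if'' direction of the theorem rests on the moduli-space analysis of \cite{KLMNS} (for $N_r+\fu$ it also follows from an easy rank bound showing any nilpotent completion has at most $r$ Jordan blocks), but this does not change your conclusion.
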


While the results of \cite{KLMNS} determine exactly when  $\M(\omega^{-r}\dzz,\orb_{\la{}})$ is nonempty, they do not provide an explicit element of the moduli space.  When $r>n$, it is easy to find such an connection.  Indeed, if $X\in\orb_{\la{}}$ is strictly upper triangular (for example, if $X$ is the Jordan canonical form of $\orb_{\la{}}$), then  $d+(\omega^{-r}+X)\dzz$ has the desired local behavior.    Constructing an explicit connection in the moduli space is more complicated for $r<n$ and $|\la{}|\le r$, but it can be translated into a certain problem in matrix theory.

In linear algebra, an ``upper matrix completion problem'' is a problem where one studies various properties of the cosets $A+\fu$, where $A\in\fgl_n(\C)$ and $\fu$ is the space of strictly upper triangular matrices.  These problems have attracted great interest in the matrix theory community; see, for example, \cite{ball1990eigenvalues,gohberg1992bounds,gurvits1992controllability,woerdeman1989minimal,BeKr,KrRo,RODMAN1992221,KrLe,krupnik97}, as well as the survey in \cite[Chapter 35]{hogben}. Here, we are interested in the Upper Nilpotent Completion Problem: given $A$ nilpotent, determine the nilpotent orbits which intersect the coset $A+\fu$.  The most general result on this problem was conjectured by Rodman and Shalom~\cite{RODMAN1992221} and proved by Krupnik and Leibman~\cite{KrLe}.  It states that if $\mu$ is the partition of $n$ corresponding to the nilpotent orbit of $A$ and $\la{}$ is a partition dominating $\mu$, then there exists an element in $(A+\fu)\cap\orb_{\la{}}$.

Let us now restrict to the case $A=N_r$, and let $\mu_r$ be the corresponding partition.  It is shown in \cite{KLMNS} that the partitions dominating $\mu_r$ are precisely those with at most $r$ parts.  Let $\lambda$ be such a partition.  If $X$ is a strictly upper triangular matrix such that $N_r+X$ is nilpotent of type $\lambda$, then one can show that $d+(\omega^{-r}+X)\dzz$ corresponds to the element $B\cdot(\alpha,Y)\in\M(\omega^{-r}\dzz,\orb_{\la{}})$ with $\alpha=(\omega^{-r}+X)\dzz$ and $Y=-(N_r+X)$.  Krupnik and Leibman's result guarantees the existence of such an $X$.  However, their proof, while constructive in theory, does not seem possible to carry out in practice.   Indeed, their algorithm requires repeatedly transforming matrices into special Jordan-like forms, and even when starting with $A=N_r$, we are not aware of a numerically stable way of carrying out this algorithm.

In the next section, we specify a numerically stable and highly efficient algorithm that generates explicit solutions to the Upper Nilpotent Completion Problem for $N_r$, and hence leads to the construction of explicit Coxeter connections with arbitrary specified local behavior.

\section{The algorithm}\label{sec:algo}
In this section, we introduce our algorithm. The specification of the algorithm (in Section~\ref{sec:algospec}) and its proof of correctness (in Section~\ref{sec:correctness}) are based on many of the same graph-theoretic tools used by Krupnik and Leibman~\cite{KrLe}. Section~\ref{sec:graph} mostly recalls terminology and a key result (Proposition~\ref{thm:downward}) from \cite{KrLe}. Section~\ref{sec:transform} recalls the notion of a ``graph transformation'' from \cite{KrLe}, and introduces a class of graph transformations that we call ``graft transformations'', which are the fundamental operations in our algorithm.

\subsection{The graph of a matrix}\label{sec:graph}
Let $n$ be a positive integer, and let $V$ be a set of $n$ elements, called \emph{vertices}. Fix a bijection $\ordk{}\colon V\rightarrow[1,n]$, called an \emph{ordinal function}, which imposes a total ordering on $V$. We define a \emph{$\fgl_n$-graph} to be a pair consisting of:
\begin{enumerate}
\item a subset $E\subseteq V\times V$, whose elements are called \emph{arrows}; and
\item a \emph{weight function} $\alpha \colon E\rightarrow\Cs$.
\end{enumerate}
We define a bijective correspondence $\graphfxn$ between $\fgl_n(\C)$ and the set of $\fgl_n$-graphs as follows. Let $A=(\alpha_{i,j})_{i,j=1}^n$ be an element of $\fgl_n(\C)$. Define $\graphfxn(A)$ to be the $\fgl_n$-graph with the property that two vertices $u$ and $v$ are joined by an arrow $u\mapsto v$ if and only if $\alpha_{\ordk{}(v),\ordk{}(u)}\neq 0$, and the weight of each arrow $u\mapsto v$ is $\alpha_{\ordk{}(v),\ordk{}(u)}$. Define $\matrixfxn$ to be the inverse of the function $\graphfxn$.

\begin{rmk}
Starting in Section~\ref{sec:algospec}, we work exclusively with \emph{binary matrices}, i.e., matrices where each entry is either zero or one. Each arrow in the $\fgl_n$-graph of a binary matrix is weighted by one. Since such $\fgl_n$-graphs will be our focus, we assume (for the sake of simplicity) that any arrow $u\mapsto v$ lacking an explicit weight is implicitly weighted by one.
\end{rmk}

We consider embeddings $\phi$ of $\fgl_n$-graphs---or more precisely, their vertex sets---into $\N\times\N$. 
The image $\phi(v)=(x,y)$ of a vertex $v\in V$ has \emph{position} $x=x(v)$ and \emph{level} $y=y(v)$. Henceforth, we use the term ``graph'' to refer to a $\fgl_n$-graph equipped with an embedding into $\N\times\N$. We frequently refer to a vertex in a graph $\gr{}$ via its coordinates, and write $\ordg{}(x,y)$ to denote the ordinal of $\phi^{-1}((x,y))$.

A vertex $v$ is \emph{above} (resp. \emph{below}) a vertex $w$ if $x(v)=x(w)$ and $y(v)=y(w)+1$ (resp. $y(v)=y(w)-1$).  An arrow $v\mapsto w$ \emph{goes down} if $y(v)>y(w)$, and \emph{goes down-right} if $y(v)>y(w)$ and $x(v)\le x(w)$.  A graph $\gr{}$ is \emph{downward} if each of its arrows go down.  A graph $\gr{}$ is \emph{properly downward} if:
\begin{itemize}
\item $\gr{}$ is downward;
\item for every vertex $v$ with $y(v)>1$, there is a vertex $w$ below $v$ and an arrow $v\mapsto w$; and
\item every arrow $v\mapsto w$ with $y(v)=y(w)+1$ goes down-right.
\end{itemize}
We define the \emph{domain} $\Index{}$ of a graph $\gr{}$ with vertex set $V$ by $\Index{}=\{x(v) \mid v\in V\}$. The \emph{column in a graph $\gr{}$ at position $i$}---or, more concisely, \emph{column $i$ in $\gr{}$}---is $\{v\in V \mid x(v)=i\}$.  If $C$ is a column in a graph, then the \emph{height of $C$} is $\max\{y(v) \mid v\in C\}$.  If $i\in\Index{}$, then we denote the height of column $i$ in $\gr{}$ by $\heia{}{i}$, and the multiset of heights in $\gr{}$ by $\graphpart(\gr{})$.

\begin{prop}[{\cite[Proposition 3.2]{KrLe}}]
\label{thm:downward}
  Let $\gr{}$ be a graph on $n$ vertices.  If $\gr{}$ is downward, then $\matrixfxn(\gr{})$ is nilpotent.  If $\gr{}$ is properly downward, then $\graphpart(\gr{})$ is a partition of $n$ and $\matrixfxn(\gr{})$ is nilpotent of type $\graphpart(\gr{})$.
\end{prop}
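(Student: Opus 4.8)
\emph{Overall approach.} The plan is to recast everything in terms of directed paths in the embedded graph $\gr{}$. Set $A=\matrixfxn(\gr{})$ and identify the standard basis $\{e_v\}_{v\in V}$ of $\C^n$ with the vertex set via the ordinal function, so that $A e_u=\sum_{v:\,u\mapsto v}\alpha(u\mapsto v)\,e_v$ and, more generally, the coefficient of $e_w$ in $A^k e_u$ is the weighted count of directed paths of length $k$ from $u$ to $w$. For the first assertion, if $\gr{}$ is downward then along any directed path $v_0\mapsto v_1\mapsto\cdots\mapsto v_k$ the levels strictly decrease, $y(v_0)>y(v_1)>\cdots>y(v_k)\ge 1$; hence the $v_i$ are pairwise distinct, $k\le n-1$, there is no directed path of length $n$, every entry of $A^n$ vanishes, and $A$ is nilpotent.

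\emph{$\graphpart(\gr{})$ is a partition of $n$.} The columns of $\gr{}$ partition $V$, so it suffices to show that column $i$ has exactly $\heia{}{i}$ vertices. Since $\gr{}$ is properly downward, the second defining clause says every vertex at level $y>1$ has the vertex directly below it present in $\gr{}$; downward induction then shows column $i$ contains a vertex at each of the levels $1,2,\dots,\heia{}{i}$ and, by maximality of the height, no others. Hence $\sum_{i\in\Index{}}\heia{}{i}=|V|=n$.

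\emph{The Jordan type.} Write $c_{i,y}$ for the vertex of $\gr{}$ at position $i$ and level $y$, and set $h_i=\heia{}{i}$. No arrow leaves a level-$1$ vertex, and inductively $A^k e_{c_{i,y}}=0$ unless $y>k$, so the image of $A^k$ is spanned by $\{A^k e_{c_{i,y}} : y>k\}$. I would prove these spanning vectors are linearly independent by a leading-term argument with respect to the total order $\prec$ on $V$ defined by $v\prec w$ iff $y(v)<y(w)$, or $y(v)=y(w)$ and $x(v)>x(w)$ (ascending in level; within a level, descending in position). The key claim, proved by induction on $k$, is that whenever $y-k\ge 1$,
\[
A^k e_{c_{i,y}} \;=\; \beta\,e_{c_{i,\,y-k}} \;+\; (\text{a linear combination of }e_w\text{ with }w\prec c_{i,\,y-k}),\qquad \beta\ne 0 .
\]
The nonzero coefficient $\beta$ is produced by iterating the arrow from $c_{i,\,y-k+1}$ to the vertex directly below it, which exists (second clause) and has nonzero weight; any other arrow out of $c_{i,\,y-k+1}$ either drops more than one level or is one-level-down and hence, by the third clause, goes down-right, landing at a position $\ge i$, i.e.\ at a vertex $\prec c_{i,\,y-k}$. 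The same down-right property prevents the lower-order terms already present from contributing to the coefficient of $e_{c_{i,\,y-k}}$ after another application of $A$: an arrow into $c_{i,\,y-k}$ from a level-$(y-k+1)$ vertex strictly to its right would have to go down-right, which it cannot. Granting the claim, the vectors $A^k e_{c_{i,y}}$ with $y>k$ have pairwise distinct $\prec$-leading terms $e_{c_{i,\,y-k}}$, so they are independent and $\mathrm{rank}(A^k)=\#\{v\in V:y(v)>k\}=\sum_i\max(h_i-k,0)$ for all $k\ge 0$. For a nilpotent matrix this quantity also equals $\sum_j\max(\lambda_j-k,0)$, where $\lambda$ is its Jordan type; since the two families of equalities $\sum_i\max(h_i-k,0)=\sum_j\max(\lambda_j-k,0)$ ($k\ge 0$) force $\lambda=\graphpart(\gr{})$, we conclude that $A$ is nilpotent of type $\graphpart(\gr{})$.

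\emph{Main obstacle.} The nilpotency and the gapless-column bookkeeping are routine; the real content is the displayed leading-term identity — checking that no cancellation destroys $\beta$ and that the down-right hypothesis is exactly strong enough to keep every stray contribution strictly $\prec c_{i,\,y-k}$. Choosing the order $\prec$ correctly (ascending in level, descending in position within a level) is the small but essential trick: with the opposite convention the rightward one-level-down arrows would break the triangular structure and the argument would fail.
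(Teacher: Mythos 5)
Your proof is correct. The paper itself does not prove this proposition---it is quoted from Krupnik--Leibman \cite[Proposition 3.2]{KrLe}---and your argument is essentially the standard one used there: downwardness forces strict triangularity with respect to a level-based order (hence nilpotency), the second clause of ``properly downward'' makes each column gapless so that $\graphpart(\gr{})$ sums to $n$, and the down-right condition on one-level-down arrows gives exactly the triangular leading-term structure needed to compute $\mathrm{rank}(A^k)=\sum_i\max(\heia{}{i}-k,0)$, which pins down the Jordan type. All the steps, including the choice of order (ascending in level, descending in position) and the verification that stray contributions stay strictly below the leading term, check out.
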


We say that a column $C$ in a properly downward graph $\gr{}$ is \emph{a downward path in $\gr{}$} if the following condition holds: for each vertex $v\in C$, there exists an arrow $v\mapsto w$ if and only if $w$ is below $v$, and there exists an arrow $w\mapsto v$ if and only if $w$ is above $v$.

Let $X$ be a subset of the vertex set of a graph $\gr{}$. We say that $X$ is \emph{ordered by type-writer traversal} if, for each pair $v, w$ of vertices in $X$, $\ordk{}(v)<\ordk{}(w)$ if and only if one of the following holds: $y(v)>y(w)$, or $y(v)=y(w)$ and $x(v)<x(w)$. Note that if any subset of vertices is removed from a set ordered by type-writer traversal, then the resulting set remains ordered by type-writer traversal.

\begin{lem}\label{lem:Nrembed}
  Let $0<r<n$.  There exists a unique graph $\gr{}$ such that $\matrixfxn(\gr{})=N_r$, $\gr{}$ is properly downward, the vertex set is ordered by type-writer traversal, and the following conditions are satisfied:
  \begin{enumerate}
  \item $\Index{}=[1,r]$;
  \item each column is a downward path;
  \item each column has height $\flo{n/r}$ or $\cei{n/r}$; and
  \item if $1\le i<j\le r$, then $\heia{}{i}\le\heia{}{j}$.
  \end{enumerate}
\end{lem}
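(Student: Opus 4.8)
The plan is this: conditions (1), (3), (4), together with the hypotheses that every column is a downward path and that $\gr{}$ is properly downward on $n$ vertices, force the ``shape'' of any admissible embedding, i.e.\ the function $i\mapsto\heia{}{i}$; the type-writer traversal condition then forces the embedding $\phi$ itself; and finally one checks by a direct computation that the resulting embedding, put on the underlying $\fgl_n$-graph $\graphfxn(N_r)$, realizes all of the listed properties. The first two steps give uniqueness and the third gives existence.

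For the shape, write $n=qr+\rmd$ with $q=\flo{n/r}$ and $0\le\rmd<r$. Because $\gr{}$ is properly downward, a vertex with level $>1$ has a vertex directly below it; iterating, column $i$ occupies exactly the levels $1,\dots,\heia{}{i}$, one vertex at each level, and so has $\heia{}{i}$ vertices. By (1) the positions are $1,\dots,r$, so $\sum_{i=1}^r\heia{}{i}$ equals the number of vertices, which is $n$. Together with (3) (each $\heia{}{i}$ is $q$ or $q+1$) and (4) (the heights are nondecreasing in $i$), this forces $\heia{}{i}=q$ for $i\le r-\rmd$ and $\heia{}{i}=q+1$ for $i>r-\rmd$ (all heights equal to $q=n/r$ when $\rmd=0$). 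Hence the set of occupied cells is $\{(x,y):1\le x\le r,\ 1\le y\le\heia{}{x}\}$, a set of exactly $n$ cells, independent of $\gr{}$.

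Since the ordinal function $\ordk{}$ is fixed and type-writer traversal imposes one definite total order on these $n$ cells (decreasing level, ties broken by increasing position), the requirement that the vertex set be ordered by type-writer traversal pins down $\phi$: it must send $\ordk{}^{-1}(k)$ to the $k$th cell in that order. Explicitly, the cell at position $x$ and level $y$ receives ordinal $x-(r-\rmd)$ when $y=q+1$ and $\rmd+(q-y)r+x$ when $1\le y\le q$. To finish, one checks that $\matrixfxn$ of this graph is $N_r$, equivalently that the arrows of $\graphfxn(N_r)$---the pairs $(v,w)$ with $\ordk{}(w)=\ordk{}(v)+r$---are precisely the vertical pairs $(x,y)\mapsto(x,y-1)$ with $2\le y\le\heia{}{x}$. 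A short computation with the two ordinal formulas shows each vertical pair has ordinals differing by $r$, and conversely that $\ordk{}(v)=k$ and $\ordk{}(w)=k+r\le n$ force $w$ to lie directly below $v$ in the same column (a level-$1$ vertex has no out-arrow because $k+r>n$, and a top vertex is covered by the case $y=q+1$). Consequently each column is a downward path, and together with the explicit shape this verifies (1)--(4), proper downwardness, and the type-writer condition, giving existence; the forcing in the previous two paragraphs gives uniqueness, since it pins down the underlying $\fgl_n$-graph, the shape, and hence $\phi$.

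The one genuinely delicate point is this final arrow-matching computation: ``adding $r$ to an ordinal'' corresponds to ``descending one level within the same column'' because each of the levels $1,\dots,q$ contains exactly $r$ vertices, but this must be checked separately at the top level $q+1$, where the ordinal formula changes shape, and at level $1$, where $k+r$ exceeds $n$ and no arrow is present. It is also worth noting that the reduction ``a downward path occupies consecutive levels $1,\dots,h$'' genuinely uses proper downwardness (each vertex above level $1$ has a vertex directly below it), not merely the definition of a downward path.
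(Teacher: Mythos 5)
Your proof is correct. The paper states Lemma~\ref{lem:Nrembed} without proof (offering only the illustrative Figure~2), and your argument is the natural one the authors leave implicit: proper downwardness plus injectivity of the embedding forces each column to occupy levels $1,\dots,\heia{}{i}$, conditions (1), (3), (4) and the vertex count then force the column heights ($r-\rmd$ columns of height $\flo{n/r}$ followed by $\rmd$ of height $\cei{n/r}$), type-writer traversal pins down $\phi$, and the arrow-matching computation (ordinal $k$ maps to $k+r$ exactly when one descends a level in the same column) verifies existence; your explicit ordinal formulas reproduce the paper's Figure~2 exactly in the case $n=10$, $r=3$.
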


\begin{figure}[ht!]
\begin{center}
\begin{tikzpicture}
 [inner sep=0.5mm, place/.style={circle, draw}]

\node  (8) at (1,0)  [place] {\ \tiny{8 }  };
\node  (5) at (1,1)  [place] {\ \tiny{5 }  };
\node  (2) at (1,2)  [place] {\ \tiny{2 }  };
\node  (9) at (2,0)  [place] {\ \tiny{9 }  };
\node (6) at (2,1)  [place] {\ \tiny{6 }  };
\node  (3) at (2,2)  [place] {\ \tiny{3 }  };
\node (10) at (3,0) [place] {\tiny{10 }};
\node (7) at (3,1) [place] {\ \tiny{7 } } ;
\node (4) at (3,2) [place] {\ \tiny{4 }  };
\node (1) at (3,3) [place] {\ \tiny{1 }  };

\draw [thick,->] (1)-- (4);
\draw [thick,->] (4)-- (7);
\draw [thick,->,] (7)-- (10);

\draw [thick,->] (2)-- (5);
\draw [thick,->] (5)-- (8);

\draw [thick,->] (3)-- (6);
\draw [thick,->] (6)-- (9);

\end{tikzpicture}
 \caption{The graph $\gr{}$ of $N_3\in\fgl_{10}(\C)$ as described in Lemma~\ref{lem:Nrembed}.}
\label{fig:graph}
\end{center}
\end{figure}

\subsection{Transformations of a graph}\label{sec:transform}
Let $\gr{}$ be a graph, and let $\phi$ be the associated embedding of the vertex set into $\N\times\N$. Any change of the coordinates of the vertices of $\gr{}$ is called a \emph{geometric transformation of $\gr{}$}.  In other words, a geometric transformation of $\gr{}$ is the replacement of $\phi$ with another embedding $\phi^\prime$.  Geometric transformations preserve the matrix associated to $\gr{}$.  More generally, a \emph{transformation of a graph $\gr{}$} describes any finite sequence of the following:
\begin{itemize}
\item an addition or deletion of an arrow $u\mapsto v$;
\item a change of the weight $\alpha_{\ordk{}(v),\ordk{}(u)}$ of an arrow $u\mapsto v$; or
\item a geometric transformation.
\end{itemize}

We focus on one class of graph transformations that we call ``graft transformations'', defined in Definition~\ref{def:graft}. Our algorithm (Algorithm~\ref{alg:main}) performs a finite sequence of these graft transformations.

\begin{definition}\label{def:graft}
A graft transformation takes as input:
\begin{itemize}
\item a properly downward graph $\gr{}$;
\item two indices $\thief{}$ and $\vic{}$ (which we call the \emph{scion} and \emph{stock}, respectively) in $\Index{}$ with the property that column $\vic{}$ is a downward path and $\thief{}<\vic{}$; and
\item an integer $m$ satisfying $0<m\le\vich{}{}$ (the number of vertices that will be grafted).
\end{itemize}
To \emph{graft $m$ vertices in $\gr{}$ from $\vic{}$ to $\thief{}$}, perform the following two transformations on $\gr{}$:
\begin{itemize}
\item[Step 1:]Add an arrow $(\vic{},\vich{}{}-m+1)\mapsto(\thief{},\thiefh{}{})$.
\item[Step 2:]``Translate'' the top $m$ vertices of column $\vic{}$ to the top of column $\thief{}$; i.e., for each $i\in[1,m]$, change the embedding of vertex $(\vic{},\vich{}{}-m+i)$ to $(\thief{},\thiefh{}{}+i)$.
\end{itemize}
\end{definition} 

\begin{example} Let $\gr{}$ be the graph of $N_{3}\in\fgl_{10}$ as described in Lemma~\ref{lem:Nrembed}. Figure~\ref{fig:grafting} shows Steps 1 and 2 involved in grafting $m=2$ vertices in $\gr{}$ from column $\vic{}=3$ to column $\thief{}=2$.

\newsavebox\first
\newsavebox\second
\newsavebox\third

\begin{lrbox}{\first}
  \begin{tikzpicture}
 [inner sep=0.5mm, place/.style={circle, draw}]

\node  (8) at (1,0)  [place] {\ \tiny{8 }  };
\node  (5) at (1,1)  [place] {\ \tiny{5 }  };
\node  (2) at (1,2)  [place] {\ \tiny{2 }  };
\node  (9) at (2,0)  [place] {\ \tiny{9 }  };
\node (6) at (2,1)  [place] {\ \tiny{6 }  };
\node  (3) at (2,2)  [place] {\ \tiny{3 }  };
\node (10) at (3,0) [place] {\tiny{10 }};
\node (7) at (3,1) [place] {\ \tiny{7 } } ;
\node (4) at (3,2) [place] {\ \tiny{4 }  };
\node (1) at (3,3) [place] {\ \tiny{1 }  };

\draw [thick,->] (1)-- (4);
\draw [thick,->] (4)-- (7);
\draw [thick,->,] (7)-- (10);

\draw [thick,->] (2)-- (5);
\draw [thick,->] (5)-- (8);

\draw [thick,->] (3)-- (6);
\draw [thick,->] (6)-- (9);

  \end{tikzpicture}
\end{lrbox}

\begin{lrbox}{\second}
  \begin{tikzpicture}
 [inner sep=0.5mm, place/.style={circle, draw}]
\node  (8') at (7,0)  [place] {\ \tiny{8 }  };
\node  (5') at (7,1)  [place] {\ \tiny{5 }  };
\node  (2') at (7,2)  [place] {\ \tiny{2 }  };
\node  (9') at (8,0)  [place] {\ \tiny{9 }  };
\node (6') at (8,1)  [place] {\ \tiny{6 }  };
\node  (3') at (8,2)  [place] {\ \tiny{3 }  };
\node (10') at (9,0) [place] {\tiny{10 }};
\node (7') at (9,1) [place] {\ \tiny{7 } } ;
\node (4') at (9,2) [place] {\ \tiny{4 }  };
\node (1') at (9,3) [place] {\ \tiny{1 }  };

\draw [thick,->,] (1')-- (4');
\draw [thick,->,] (4')-- (7');
\draw [thick,->,] (7')-- (10');

\draw [thick,->] (2')-- (5');
\draw [thick,->] (5')-- (8');

\draw [thick,->] (3')-- (6');
\draw [thick,->] (6')-- (9');

\draw [thick,->,orange] (4')-- (3');
\end{tikzpicture}
\end{lrbox}

\begin{lrbox}{\third}
  \begin{tikzpicture}
 [inner sep=0.5mm, place/.style={circle, draw}]
\node  (8'') at (13,0)  [place] {\ \tiny{8 } };
\node  (5'') at (13,1)  [place] {\ \tiny{5 }  };
\node  (2'') at (13,2)  [place] {\ \tiny{2 }  };
\node  (9'') at (14,0)  [place] {\ \tiny{9 }  };
\node (6'') at (14,1)  [place] {\ \tiny{6 }  };
\node  (3'') at (14,2)  [place] {\ \tiny{3 }  };
\node (10'') at (15,0) [place] {\tiny{10 }};
\node (7'') at (15,1) [place] {\ \tiny{7 } } ;
\node (4'') at (14,3) [place] {\ \tiny{4 }  };
\node (1'') at (14,4) [place] {\ \tiny{1 }  };

\draw [thick,->,] (1'')-- (4'');
\draw [thick,->,] (4'')-- (7'');
\draw [thick,->,] (7'')-- (10'');

\draw [thick,->] (2'')-- (5'');
\draw [thick,->] (5'')-- (8'');

\draw [thick,->] (3'')-- (6'');
\draw [thick,->] (6'')-- (9'');

\draw [thick,->] (4'')-- (3'');

\end{tikzpicture}
\end{lrbox}

\begin{figure}[ht!]
\begin{tikzpicture}[red]
  \node (1) at(0,0) {
        \usebox\first
  };
  \node (2) at (6,0){\usebox\second};
  \node (3) at (12,0) {\usebox\third};
\draw[->, >= to] (2,0) to [edge label= Step 1] (4,0);  
\draw[->, >= to] (8,0) to [edge label=Step 2] (10,0);
\end{tikzpicture}
 \caption{On the left is the graph $\gr{}$ of $N_3\in\fgl_{10}$ as described in Lemma~\ref{lem:Nrembed}. On the right is the result of grafting $m=2$ vertices in $\gr{}$ from column $\vic{}=3$ to column $\thief{}=2$ (see Definition~\ref{def:graft}).}
\label{fig:grafting}
\end{figure}
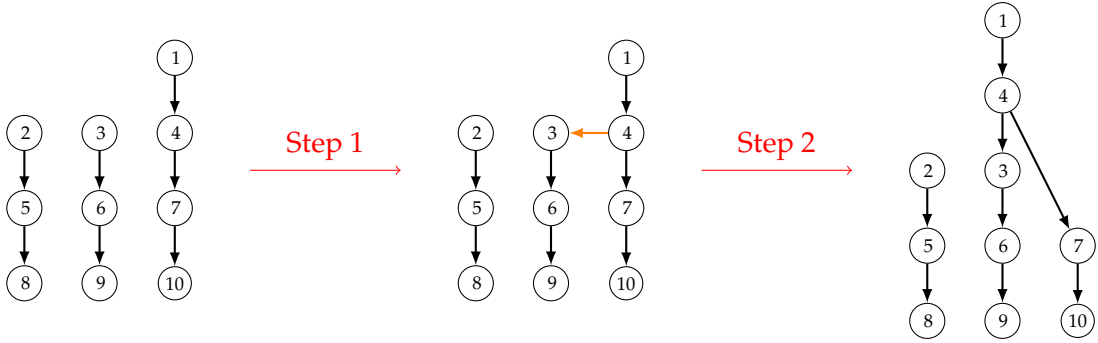
\end{example}

\begin{lem}\label{graftlemma}
  Let $\gr{1}$ be a properly downward graph, and let $\vic{}$, $\thief{}$ be indices in $\Index{1}$ with the property that column $\vic{}$ is a downward path and $\thief{}<\vic{}$.  Let $\gr{2}$ be the graph that is the result of grafting $m$ vertices from $\vic{}$ to $\thief{}$, where $0<m\le\vich{1}{}$. Then:
  \begin{enumerate}
  \item\label{graftlemmadomain} If $m=\vich{1}{}$, then $\Index{2}=\Index{1}-\{\vic{}\}$; otherwise $\Index{2}=\Index{1}$.
  \item\label{graftlemmaheight} $\heia{2}{\vic{}}=\heia{1}{\vic{}}-m$, $\heia{2}{\thief{}}=\heia{1}{\thief{}}+m$, and $\heia{2}{i}=\heia{1}{i}$ for all $i\in\Index{2}$ with $i\notin\{\thief{},\vic{}\}$.
  \item\label{graftlemmadownwardpath} If column $i$ is a downward path in $\gr{1}$ and $i\notin\{\thief{},\vic{}\}$, then column $i$ is a downward path in $\gr{2}$.
  \item\label{graftlemmaordinal} For each $i\in[1,m]$, $\ordg{2}(\thief{},\thiefh{1}{}+i)=\ordg{1}(\vic{},\vich{1}{}-m+i)$.  In particular, $\ordg{2}(\thief{},\thiefh{2}{})=\ordg{1}(\vic{},\vich{1}{})$. If $(x,y)$ is a vertex in $\gr{2}$ with $x\neq\thief{}$, then $(x,y)$ is a vertex in $\gr{1}$ and $\ordg{1}(x,y)=\ordg{2}(x,y)$.
  \item\label{graftlemmapropdw} If $\thiefh{1}{}>\vich{1}{}-m$, then $\gr{2}$ is properly downward.
  \item\label{graftlemmamodify} If $\ordg{1}(\thief{},\thiefh{1}{})<\ordg{1}(\vic{},\vich{1}{}-m+1)$, then $\matrixfxn(\gr{2})$ is obtained from $\matrixfxn(\gr{1})$ by changing a single upper triangular entry from zero to one.
  \end{enumerate}
\end{lem}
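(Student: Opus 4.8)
The plan is to verify each of the six claims directly from Definition~\ref{def:graft}, tracking how the embedding $\phi$ and the arrow set change; most parts are bookkeeping, and the real content is concentrated in parts~(\ref{graftlemmapropdw}) and~(\ref{graftlemmamodify}). First I would fix notation: write $h_\thief{}=\thiefh{1}{}$ and $h_\vic{}=\vich{1}{}$ for the column heights in $\gr{1}$, and note that Step~1 adds exactly one arrow (namely $(\vic{},h_\vic{}-m+1)\mapsto(\thief{},h_\thief{})$) while Step~2 re-embeds the top $m$ vertices of column $\vic{}$ as the new top $m$ vertices of column $\thief{}$, leaving every other vertex and every other arrow untouched. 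Part~(\ref{graftlemmadomain}) is then immediate: column $\thief{}$ is unaffected as a position, while position $\vic{}$ survives in $\Index{2}$ precisely when $m<h_\vic{}$, i.e.\ when at least one vertex remains in column $\vic{}$. Part~(\ref{graftlemmaheight}) is the same count: column $\vic{}$ loses $m$ vertices from its top and column $\thief{}$ gains $m$ on top, and since the re-embedded vertices are placed at levels $h_\thief{}+1,\dots,h_\thief{}+m$ (consecutively, by the formula in Step~2) the new height of column $\thief{}$ is exactly $h_\thief{}+m$; all other columns are literally unchanged. Part~(\ref{graftlemmadownwardpath}) is trivial since a column $i\notin\{\thief{},\vic{}\}$ has the same vertices, the same positions, and the same incident arrows in $\gr{2}$ as in $\gr{1}$.

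For part~(\ref{graftlemmaordinal}), the ordinal function $\ordk{}\colon V\to[1,n]$ is intrinsic to the vertex set $V$ and is not changed by a graft transformation (only the embedding $\phi$ changes); so the claim is just a translation of which abstract vertex now sits at which coordinate. The vertex formerly at $(\vic{},h_\vic{}-m+i)$ is moved to $(\thief{},h_\thief{}+i)$, which gives the displayed identity; taking $i=m$ and using $h_\thief{}+m=\thiefh{2}{}$ from part~(\ref{graftlemmaheight}) gives the ``in particular'' statement. A vertex of $\gr{2}$ with $x\neq\thief{}$ was not moved, so it occupied the same coordinate in $\gr{1}$ with the same underlying abstract vertex, hence the same ordinal.

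Part~(\ref{graftlemmapropdw}) is where I expect the work to be. I would check the three defining conditions of ``properly downward'' for $\gr{2}$. \emph{Downward}: the only new arrow is $(\vic{},h_\vic{}-m+1)\mapsto(\thief{},h_\thief{})$; after Step~2 the source vertex has been re-embedded to level $h_\thief{}+1$ (it is the $i=1$ vertex) while the target is at level $h_\thief{}$, so this arrow goes down; every other arrow was downward in $\gr{1}$ and its endpoints either keep their levels or are the translated vertices of column $\vic{}$, whose internal downward-path arrows are preserved as a block translate, so they remain downward. \emph{Descent at each non-bottom vertex}: for vertices of $\gr{2}$ untouched by Step~2 this is inherited from $\gr{1}$, except we must re-examine the vertex of column $\vic{}$ that was at level $h_\vic{}-m$ (if $m<h_\vic{}$): it had an arrow up to $(\vic{},h_\vic{}-m+1)$ in $\gr{1}$, but that partner has now left; however column $\vic{}$ was a downward path, so that vertex is not the bottom of its column only if $h_\vic{}-m>1$, and in that case it still has its downward arrow to $(\vic{},h_\vic{}-m-1)$, which suffices—and it is now the top of the truncated column, so it needs no upward partner. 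For the translated top-of-$\thief{}$ vertex at level $h_\thief{}+1$, it has its downward arrow (either the internal downward-path arrow inside the old column-$\vic{}$ block if $m\ge 2$, or, when $m=1$, the new arrow added in Step~1) down to $(\thief{},h_\thief{})$ — this is exactly where the hypothesis $h_\thief{}>h_\vic{}-m$ is used, guaranteeing $h_\thief{}\ge 1$ so column $\thief{}$ is nonempty and the grafted vertices genuinely sit on top of an existing vertex rather than floating. For the vertex formerly at the top of column $\thief{}$ (level $h_\thief{}$): it still has whatever downward arrow it had. \emph{Level-one arrows go down-right}: the only arrow whose source-to-target level drop is exactly one and which is new is the Step~1 arrow $(\vic{},h_\vic{}-m+1)\mapsto(\thief{},h_\thief{})$, whose source now sits at level $h_\thief{}+1$ and target at level $h_\thief{}$, so it does drop by one; since $\thief{}<\vic{}$ by hypothesis, we have $x(\text{source})=\thief{}\le\vic{}$... wait—after Step~2 the source is at position $\thief{}$, not $\vic{}$, so source and target share position $\thief{}$ and the ``down-right'' inequality $x(v)\le x(w)$ holds with equality; the other level-one arrows are the internal path arrows of the translated block and of the old column $\thief{}$, all of which are vertical, hence down-right. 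I'd double check there is no newly created level-one arrow among the old arrows incident to moved vertices; there is not, because those arrows were internal to column $\vic{}$ and translate to internal vertical arrows in column $\thief{}$.

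Finally, part~(\ref{graftlemmamodify}): by Proposition~\ref{thm:downward}-style bookkeeping the matrix changes only through the arrows that change, namely the single added arrow of Step~1 (Step~2 is geometric and preserves the matrix). That arrow contributes the entry in row $\ordk{}(\text{target})$, column $\ordk{}(\text{source})$, i.e.\ row $\ordg{1}(\thief{},\thiefh{1}{})$ and column $\ordg{1}(\vic{},\vich{1}{}-m+1)$, with weight one; since both the old and new graphs are binary and this entry was $0$ in $\gr{1}$ (no arrow between those vertices before, as column $\vic{}$ was a downward path whose only incident arrows are vertical and $\thief{}\neq\vic{}$), it becomes $1$ in $\gr{2}$. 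The hypothesis $\ordg{1}(\thief{},\thiefh{1}{})<\ordg{1}(\vic{},\vich{1}{}-m+1)$ says the changed entry has row index smaller than column index, i.e.\ it is strictly upper triangular, completing the claim. The main obstacle throughout is part~(\ref{graftlemmapropdw}): one has to be careful about which vertex becomes the new top of the truncated column $\vic{}$ and whether the single Step~1 arrow simultaneously serves as the required downward arrow for the lowest grafted vertex when $m=1$.
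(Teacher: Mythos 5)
Your overall plan is sound and parts (\ref{graftlemmadomain})--(\ref{graftlemmaordinal}) and (\ref{graftlemmamodify}) are handled correctly, but your verification of part (\ref{graftlemmapropdw}) has a concrete hole, and it sits exactly where the hypothesis $\thiefh{1}{}>\vich{1}{}-m$ is actually needed. In your ``downward'' check you assert that every pre-existing arrow has either both endpoints fixed or both endpoints translated by the same amount. That is false for exactly one arrow: when $m<\vich{1}{}$, column $\vic{}$ (being a downward path) contains the arrow $(\vic{},\vich{1}{}-m+1)\mapsto(\vic{},\vich{1}{}-m)$, whose source is re-embedded to $(\thief{},\thiefh{1}{}+1)$ while its target stays put at level $\vich{1}{}-m$. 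This arrow is not deleted (later you treat it as if ``that partner has now left,'' which it has not --- only its embedding changed), and it goes down in $\gr{2}$ precisely because $\thiefh{1}{}+1>\vich{1}{}-m$; without the hypothesis it could go sideways or up and $\gr{2}$ would fail to be downward. By contrast, the use you assign to the hypothesis --- guaranteeing $\thiefh{1}{}\ge 1$ so that the grafted vertices ``sit on top of an existing vertex'' --- is vacuous, since $\thief{}\in\Index{1}$ already forces column $\thief{}$ to be nonempty.

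Two smaller repairs in the same part: (i) the descent arrow for the vertex now at $(\thief{},\thiefh{1}{}+1)$ is the Step~1 arrow for \emph{every} $m$, not only for $m=1$; for $m\ge 2$ the ``internal block arrow'' out of that vertex is the boundary arrow just discussed, which lands in column $\vic{}$, not at $(\thief{},\thiefh{1}{})$. (ii) In the down-right check, the boundary arrow does not ``translate to an internal vertical arrow in column $\thief{}$''; one should instead note that its level drop is $(\thiefh{1}{}+1)-(\vich{1}{}-m)\ge 2$ under the hypothesis, so the down-right condition is vacuous for it (and even in the borderline case it would go down-right since $\thief{}<\vic{}$). With the boundary arrow treated explicitly, your argument for part (\ref{graftlemmapropdw}) closes, and the rest of the proof stands as written.
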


\subsection{The algorithm}\label{sec:algospec}
Our algorithm, Algorithm~\ref{alg:main}, is specified below. The algorithm takes as input two positive integers $r$ and $n$ with $r<n$, and a partition $\la{}$ of $n$ with at most $r$ parts. The algorithm consists of the following objects, each of which has a state that changes over the course of execution:
\begin{itemize}
\item a graph $\gr{}$;
\item two integers $\thief{}$ and $\vic{}$ (which we call ``column pointers''); and
\item two multisets $\LO{}$ and $\SH{}$.
\end{itemize}
The initial state of $\gr{}$ is the graph of $N_r\in\fgl_n(\C)$ as described in Lemma~\ref{lem:Nrembed}. The column pointers and multisets are initialized in lines \ref{alg:main:initstart}--\ref{alg:main:initend}. The algorithm makes use of three operations on multisets: $+$, $-$, and $\max$. To define these operations, let $\la{}$ and $\m{}{}$ be multisets. The sum $\la{}+\m{}{}$ is the multiset with multiplicity function $\partmult_{\la{}+\m{}{}}(x)=\partmult_{\la{}}(x)+\partmult_{\m{}{}}(x)$ for all $x$. The difference $\la{}-\m{}{}$ is the multiset with multiplicity function $\partmult_{\la{}-\m{}{}}(x)=\max\{0,\partmult_{\la{}}(x)-\partmult_{\m{}{}}(x)\}$ for all $x$. Finally, $\max(\la{})$ is the maximum value in $\partsupp(\la{})$.

After initialization, \emph{Loop 1} (lines \ref{alg:main:loop1start}--\ref{alg:main:loop1end}) and \emph{Loop 2} (lines \ref{alg:main:loop2start}--\ref{alg:main:loop2end}) are executed. We refer to these two loops as the \emph{primary loops} (as opposed to the ``Little Loop'' defined in lines \ref{alg:main:littleloopstart}--\ref{alg:main:littleloopend}). Each iteration of a primary loop performs at least one graft transformation to $\gr{}$, reduces the cardinality of $\SH{}$ and/or $\LO{}$ by one, and increases the column pointer $\vic{}$ (and possibly increases $\thief{}$ as well). In Section~\ref{sec:correctness}, we prove that Algorithm~\ref{alg:main} terminates after some finite number of iterations of the primary loops, and that the terminal state of $\gr{}$ satisfies the specified postconditions.

\begin{algorithm}
\caption{}
\label{alg:main}
\begin{algorithmic}[1]
\Require $r$ and $n$ are positive integers with $r<n$,
$\rmd$ is the remainder when $n$ is divided by $r$, \newline
$\gr{}$ is the graph of $N_r\in\fgl_n(\C)$ as described in Lemma~\ref{lem:Nrembed}, \newline $\la{}$ is a partition of $n$ with at most $r$ parts
\Ensure $\matrixfxn(\gr{})=N_r+X$ where $X$ is strictly upper triangular,
\newline $\matrixfxn(\gr{})$ is binary and nilpotent of type $\la{}$
\State\label{alg:main:initstart}$\LO{}\gets\{x^{\partmult_{\la{}}(x)} \mid x\in\partsupp(\la{}), x>\lceil n/r\rceil\}$
\If{$\rmd\neq 0$ {\bf and} $\partmult_{\la{}}(\cei{n/r})>\rmd$}
    \State $\LO{}\gets\LO{}+\{\cei{n/r}^{\partmult_{\la{}}(\cei{n/r})-\rmd}\}$
\EndIf
\State $\SH{}\gets\{x^{\partmult_{\la{}}(x)} \mid x\in\partsupp(\la{}), x<\flo{ n/r}\}$
\If{$\partmult_{\la{}}(\flo{n/r})>r-\rmd$}
    \State $\SH{}\gets\SH{}+\{\flo{n/r}^{\partmult_{\la{}}(\flo{n/r})-(r-\rmd)}\}$
\EndIf
\State $\thief{}\gets\min\{\partmult_{\la{}}(\flo{n/r}),r-\rmd\}+1$
\State\label{alg:main:initend}$\vic{}\gets\thief{}+1$
\While{$\SH{}$ is nonempty} \Comment{Loop 1} \label{alg:main:loop1start}
\If{$\max(\LO{})-\heia{}{\thief{}}=\heia{}{\vic{}}-\max(\SH{})+1$ {\bf and} $\heia{}{\vic{}}<\heia{}{\vic{}+1}$} \Comment{Case 1a}
	\State\label{alg:main:case1astart} graft $\heia{}{\vic{}+1}-\max(\SH{})$ vertices from column $\vic{}+1$ to column $\thief{}$
	\State $\SH{}\gets\SH{}-\{\max(\SH{})\}$
	\State $\LO{}\gets\LO{}-\{\max(\LO{})\}$
	\State $\thief{}\gets\vic{}$
	\State\label{alg:main:case1aend}$\vic{}\gets\vic{}+2$

\ElsIf{$\max(\LO{})-\thiefh{}{} > \vich{}{}-\max(\SH{})$} \Comment{Case 1b}
	\State graft $\vich{}{}-\max(\SH{})$ vertices from column $\vic{}$ to column $\thief{}$
	\State $\SH{}\gets\SH{}-\{\max(\SH{})\}$
	\State $\vic{}\gets\vic{}+1$
	
\ElsIf{$\max(\LO{})-\thiefh{}{} = \vich{}{}-\max(\SH{})$} \Comment{Case 1c}
	\State graft $\vich{}{}-\max(\SH{})$ vertices from column $\vic{}$ to column $\thief{}$
	\State $\SH{}\gets\SH{}-\{\max(\SH{})\}$
	\State $\LO{}\gets\LO{}-\{\max(\LO{})\}$
	\State $\thief{}\gets\vic{}+1$
	\State $\vic{}\gets\vic{}+2$
	
\ElsIf{$\max(\LO{})-\thiefh{}{} < \vich{}{}-\max(\SH{})$} \Comment{Case 1d}
	\State graft $\max(\LO{})-\thiefh{}{}$ vertices from column $\vic{}$ to column $\thief{}$
	\State $\LO{}\gets\LO{}-\{\max(\LO{})\}$
	\State $\thief{}\gets\vic{}$
	\State $\vic{}\gets\vic{}+1$ \label{alg:main:loop1end}
\EndIf
\EndWhile

	\algstore{some_name}
\end{algorithmic}
\end{algorithm}

\begin{algorithm}
\begin{algorithmic}[1]
    \algrestore{some_name}
\While{$\LO{}$ is nonempty} \Comment{Loop 2} \label{alg:main:loop2start}
	\While{$\max(\LO{})-\thiefh{}{}>\cei{n/r}$} \Comment{Little Loop} \label{alg:main:littleloopstart}
		\State graft $\vich{}{}$ vertices from column $\vic{}$ to column $\thief{}$ \label{alg:main:littleloopfirst}
		\State $\vic{}\gets\vic{}+1$ \label{alg:main:littleloopend}
	\EndWhile
	
	\If{$\max(\LO{})-\thiefh{}{}>\vich{}{}$ {\bf and} $\heia{}{\vic{}+1}=\cei{n/r}$} \Comment{Case 2a}
		\State graft $\heia{}{\vic{}+1}$ vertices from column $\vic{}+1$ to column $\thief{}$
		\State $\thief{}\gets\vic{}$
		\State $\vic{}\gets\vic{}+2$
	
	\ElsIf{$\max(\LO{})-\thiefh{}{}>\vich{}{}$ {\bf and} $\heia{}{\vic{}+1}=\flo{n/r}$} \Comment{Case 2b}
		\State graft $\vich{}{}$ vertices from column $\vic{}$ to column $\thief{}$
		\State $\vic{}\gets\vic{}+1$
		\State graft one vertex from column $\vic{}$ to column $\thief{}$
		\State $\thief{}\gets\vic{}$ 
		\State $\vic{}\gets\vic{}+1$
    \ElsIf{$\max(\LO{})-\thiefh{}{} = \vich{}{}$} \Comment{Case 2c}
		\State graft $\vich{}{}$ vertices from column $\vic{}$ to column $\thief{}$
		\State $\thief{}\gets\vic{}+1$
		\State $\vic{}\gets\vic{}+2$
    
	\ElsIf{$\max(\LO{})-\thiefh{}{} < \vich{}{}$} \Comment{Case 2d}
		\State graft $\max(\LO{})-\thiefh{}{}$ vertices from column $\vic{}$ to column $\thief{}$
		\State $\thief{}\gets\vic{}$
		\State $\vic{}\gets\vic{}+1$
	\EndIf
	\State $\LO{}\gets\LO{}-\{\max(\LO{})\}$ \label{alg:main:loop2end}
\EndWhile
\end{algorithmic}
\end{algorithm}

We have implemented Algorithm~\ref{alg:main} in the SageMath computer algebra system~\cite{sagemath}, as well as the NumPy array and matrix computing library~\cite{numpy} for the Python programming language.\footnote{Our Python implementation is publicly available at https:/\!/github.com/neallivesay/nilpotent-completions.} We have tested our SageMath implementation for all positive integers $r$ and $n$ and all partitions $\la{}$ of $n$ satisfying $0<r<n\le 30$ and $|\la{}|\le r$.

\newpage
\section{Proof of correctness}\label{sec:correctness}
In this section, we prove that Algorithm~\ref{alg:main} is correct. In other words, we prove that given any valid input, the algorithm is well-defined and terminates, and that the matrix associated to the terminal state of the graph $\gr{}$ satisfies the specified postconditions. With this goal in mind, let $r$ and $n$ be positive integers with $r<n$, let $\rmd$ be the remainder when $n$ is divided by $r$, and let $\la{}$ be a partition with at most $r$ parts. Initialize a graph $\gr{}$, multisets $\LO{}$ and $\SH{}$, and column pointers $\thief{}$ and $\vic{}$ as specified in lines \ref{alg:main:initstart}--\ref{alg:main:initend}.

Our proof is inductive on the number $k$ of times that a primary loop has executed; i.e., $k$ equals the sum of the number of times Loop 1 (lines \ref{alg:main:loop1start}--\ref{alg:main:loop1end}) has executed with the number of times Loop 2 (lines \ref{alg:main:loop2start}--\ref{alg:main:loop2end}) has executed. For each object (i.e., a graph, multiset, or pointer) $Z$, let $Z^k$ denote the state of $Z$ at the end of the $k$th iteration, with $Z^0$ denoting the state of $Z$ after initialization (i.e., after execution of lines \ref{alg:main:initstart}--\ref{alg:main:initend}). Fix once and for all a constant column pointer $\term = r-\min\{\rmd,\partmult_{\la{}}(\cei{n/r})\}$.

The proof involves a simultaneous induction over the following propositional functions of $k$:
  \begin{itemize}
  \item[$\pro{\welldef}{k}:$] Each instruction is well-defined in iteration $k$.
  \item[$\pro{\modify}{k}:$] $\matrixfxn(\gr{k})=\matrixfxn(\gr{k-1})+X$ for some strictly upper triangular matrix $X$.
  \item[$\pro{\multisetsdecrease}{k}:$] $\SH{k}\subseteq\SH{k-1}$ and $\LO{k}\subseteq\LO{k-1}$, with at least one subset relation strict.
  \item[$\pro{\propdw}{k}:$] $\gr{k}$ is properly downward.
  \item[$\pro{\converge}{k}:$] $\{\heia{k}{i}\mid i\in\Index{k}, i\notin [\vic{k},\term],i\neq\thief{k}\}=\la{}-(\LO{k}+\SH{k})$.
  \item[$\pro{\conditionx}{k}:$] If $i$ satisfies $\vic{k}\le i\le\term$, then:
    \begin{enumerate}
    \item $i\in\Index{k}$;
    \item column $i$ is a downward path;
    \item $\heia{k}{i}=\flo{n/r}$ or $\heia{k}{i}=\cei{n/r}$;
    \item if $i<j\le\term$, then $\heia{k}{i}\le\heia{k}{j}$; and
    \item if $\SH{k}$ is nonempty, then $\heia{k}{i}>\max(\SH{k})$, and if $\LO{k}$ is nonempty, then $\heia{k}{i}<\max(\LO{k})$.
    \end{enumerate}
    Moreover, the set of vertices $v$ with $\vic{k}\le x(v)\le\term$ are ordered by type-writer traversal.
  \item[$\pro{\pointers}{k}:$] $\thief{k}<\vic{k}$.  If $\LO{k}$ is nonempty, then $\thief{k},\vic{k}\in\Index{k}$ and $\vic{k}\le\term$.
  \item[$\pro{\ordinalgood}{k}:$] If $\SH{k}$ is nonempty, then $\ordg{k}(\thief{k},\thiefh{k}{k})<\ordg{k}(\vic{k},\max(\SH{k})+1)$.  If $\LO{k}$ is nonempty, then $\ordg{k}(\thief{k},\thiefh{k}{k})<\ordg{k}(\vic{k},\max\{1,\vich{k}{k}-(\max(\LO{k})-\thiefh{k}{k})+1\})$.
  \item[$\pro{\heightgood}{k}:$]If $\SH{k}$ is nonempty, then $\thiefh{k}{k}>\max(\SH{k})$.
  \item[$\pro{\tallimpliesshort}{k}:$] If $\SH{k}$ is nonempty, then $\LO{k}$ is nonempty.
  \end{itemize}
For all $i\in[1,\multisetsdecrease]$ (resp. for all $i\in[\propdw,\tallimpliesshort]$), the domain of $P_i$ is the set of all $k\ge 1$ (resp. all $k\ge 0$) such that $\SH{k-1}$ or $\LO{k-1}$ is nonempty. Lemma~\ref{lem:sufficiency} establishes the sufficiency of the universal quantifications of the above propositional functions for proving correctness of Algorithm~\ref{alg:main}.

\begin{lem}\label{lem:sufficiency}
If $P_i(0)$ holds for all $\propdw \le i \le \tallimpliesshort$, and $P_i(k)$ holds for all $1 \le i \le \tallimpliesshort$ and all $k\ge 1$ with the property that $\SH{k-1}$ or $\LO{k-1}$ is nonempty, then the algorithm terminates after some number of steps $\lasttwo$. Moreover, $\matrixfxn(\gr{\lasttwo})=N_r + X$ for some strictly upper triangular matrix $X$, and $\matrixfxn(\gr{\lasttwo})$ is nilpotent of type $\la{}$.
\end{lem}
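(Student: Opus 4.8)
The plan is to take all the loop invariants as granted and read off the three assertions of the lemma --- termination, the form $N_r+X$, and the nilpotent type --- by control-flow bookkeeping; no new surgery on graphs is needed. For \emph{termination}, I would track the monovariant $|\SH{k}|+|\LO{k}|$: by $\pro{\multisetsdecrease}{k}$ it decreases by at least one with each primary-loop iteration that executes, so at most $|\SH{0}|+|\LO{0}|$ such iterations occur. It then remains only to bound the Little Loop nested inside a single Loop~2 iteration: by $\pro{\welldef}{k}$ each of its grafts is a genuine graft transformation, so the height statement of Lemma~\ref{graftlemma} applies and $\thiefh{}{}$ strictly increases at every Little Loop pass while $\max(\LO{})$ is unchanged, forcing the guard $\max(\LO{})-\thiefh{}{}>\cei{n/r}$ to fail after finitely many passes. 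Hence the algorithm halts; write $\lasttwo$ for the total number of primary-loop iterations. (The invariants $\pro{\welldef}{k}$ and $\pro{\tallimpliesshort}{k}$ guarantee en route that each $\max(\SH{})$ and $\max(\LO{})$ appearing in a guard or a graft is the maximum of a nonempty multiset, so no instruction is ever stuck.)

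The form $\matrixfxn(\gr{\lasttwo})=N_r+X$ is then immediate: the initial state $\gr{0}$ is the graph of $N_r$ given by Lemma~\ref{lem:Nrembed}, so $\matrixfxn(\gr{0})=N_r$, and telescoping $\pro{\modify}{k}$ over $k=1,\dots,\lasttwo$ writes $\matrixfxn(\gr{\lasttwo})$ as $N_r$ plus a sum of strictly upper triangular matrices, i.e.\ as $N_r$ plus a single strictly upper triangular matrix $X$. (Since every arrow that is ever created has weight $1$ and $N_r$ is binary, the same telescoping shows $\matrixfxn(\gr{\lasttwo})$ is binary, although the lemma does not ask for this.)

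For the nilpotent type, $\pro{\propdw}{\lasttwo}$ says $\gr{\lasttwo}$ is properly downward, so Proposition~\ref{thm:downward} gives that $\matrixfxn(\gr{\lasttwo})$ is nilpotent of type $\graphpart(\gr{\lasttwo})$ and that $\graphpart(\gr{\lasttwo})$ is a partition of $n$. At termination $\SH{\lasttwo}$ is empty (Loop~1 runs exactly until $\SH{}$ is exhausted, and Loop~2 never modifies $\SH{}$) and $\LO{\lasttwo}$ is empty (the exit condition of Loop~2), so $\pro{\converge}{\lasttwo}$ becomes
\[ \{\heia{\lasttwo}{i}\mid i\in\Index{\lasttwo},\ i\notin[\vic{\lasttwo},\term],\ i\neq\thief{\lasttwo}\}=\la{}. \]
Both $\la{}$ and $\graphpart(\gr{\lasttwo})=\{\heia{\lasttwo}{i}\mid i\in\Index{\lasttwo}\}$ are partitions of $n$, so the heights of the columns excluded on the left --- the indices of $\Index{\lasttwo}$ lying in $[\vic{\lasttwo},\term]$, together with $\thief{\lasttwo}$ when $\thief{\lasttwo}\in\Index{\lasttwo}$ --- must sum to $n-n=0$; as every column whose index lies in $\Index{\lasttwo}$ is nonempty and hence has height at least $1$, there are no such columns, and therefore $\graphpart(\gr{\lasttwo})=\la{}$, as required.

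I expect the main obstacle to be this last paragraph. One must follow the hand-off between the two primary loops --- using $\pro{\tallimpliesshort}{k}$ to confirm that $\SH{}$ is already empty when Loop~2 begins and does not reappear --- in order to be certain that \emph{both} multisets are empty at termination, and then genuinely rule out surviving unprocessed columns at positions in $[\vic{\lasttwo},\term]$ or at $\thief{\lasttwo}$ before the parts-counting identity can be invoked. Everything else --- the termination monovariant and the telescoping for the $N_r+X$ form --- is routine once the invariants are in hand.
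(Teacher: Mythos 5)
Your proof is correct and follows essentially the same route as the paper's: termination from $\pro{\multisetsdecrease}{k}$, the form $N_r+X$ by telescoping $\pro{\modify}{k}$, and the type from $\pro{\propdw}{\lasttwo}$, Proposition~\ref{thm:downward}, and $\pro{\converge}{\lasttwo}$ with both multisets empty at the end. You actually supply two details the paper leaves implicit (the Little Loop's termination via the height clause of Lemma~\ref{graftlemma}, and the parts-counting argument showing no columns indexed in $[\vic{\lasttwo},\term]\cup\{\thief{\lasttwo}\}$ survive), which only strengthens the write-up.
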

  
\begin{proof} Suppose that the hypothesis of Lemma~\ref{lem:sufficiency} holds. Define $X_{\SH{}}=\{k\in\N \mid \SH{k-1}\text{ is nonempty}\}$ and $X_{\LO{}}=\{k\in\N \mid \LO{k-1}\text{ is nonempty}\}$. Proposition $\pro{\welldef}{k}$ holds, and thus the algorithm runs, for all iterations $k\in X_{\SH{}}\cup X_{\LO{}}$. Since $\pro{\tallimpliesshort}{k}$ holds for all $k\in X_{\SH{}}\cup X_{\LO{}}$, it follows that $X_{\SH{}}\subseteq X_{\LO{}}$. Since $\pro{\multisetsdecrease}{k}$ for all $k\in X_{\SH{}}\cup X_{\LO{}}$, it follows that $\max(X_{\SH{}})\le\max(X_{\LO{}})<\infty$; Loop 1 iterates for all $k\in X_{\SH{}}$, and Loop 2 iterates for all $k\in X_{\LO{}}-X_{\SH{}}$, with execution terminating after iteration $\lasttwo=\max(X_{\LO{}})$. Proposition $\pro{\propdw}{\lasttwo}$ implies that the terminal state, $\gr{\lasttwo}$, of the graph is properly downward. Thus $\matrixfxn(\gr{\lasttwo})$ is nilpotent of type $\la{}$ by Proposition~\ref{thm:downward} and $\pro{\converge}{\lasttwo}$. Since $\lro{\modify}{k}$ holds for all $k$ such that $\LO{k-1}$ is nonempty, it follows that $\matrixfxn(\gr{\lasttwo})=N_r+X$ for some strictly upper triangular matrix $X$.
 \end{proof}
 
The remainder of this section is dedicated to proving the hypothesis of Lemma~\ref{lem:sufficiency} via simultaneous induction. We begin by establishing a basis for induction. Propositions $\pro{\propdw}{0}$, $\pro{\conditionx}{0}$, $\pro{\pointers}{0}$, $\pro{\ordinalgood}{0}$, and $\pro{\heightgood}{0}$ follow trivially (mostly as a consequence of Lemma~\ref{lem:Nrembed}). Proposition $\pro{\converge}{0}$ holds since $\{\heia{0}{i} \mid i\in\Index{0}, i\notin [\vic{0},\term],i\neq\thief{0}\} = \{\heia{0}{i} \mid i\in[1,\thief{0})\cup (\term,r]\} =\{\flo{n/r}^{\min\{\partmult_{\la{}}(\flo{n/r}),r-\rmd\}} , \cei{n/r}^{\min\{\rmd,\partmult_{\la{}}(\cei{n/r})\}}\}=\la{}-(\LO{0}+\SH{0})$. To prove $\pro{\tallimpliesshort}{0}$, suppose for a contradiction that $\SH{0}$ is nonempty and $\LO{0}$ is empty. Then at most $r^\prime$ parts in $\la{}$ equal $\cei{n/r}$. The remaining parts are at most $\flo{n/r}$, with at least one part being strictly less. But this implies $n=\partsum(\lambda)<(r-r^\prime)\flo{n/r}+r^\prime\cei{n/r}=n$, a contradiction.

To prove the inductive step, let $k$ be a positive integer with the property that $\SH{k-1}$ is nonempty or $\LO{k-1}$ is nonempty.  Suppose that the algorithm executes for $k-1$ well-defined iterations of the primary loops, and suppose that $\pro{i}{j}$ holds for each $1\le j< k$ and each $1\le i\le 11$. We proceed to prove that $\pro{i}{k}$ is true for all $1\le i\le 11$. One of the following two cases is satisfied at the start of the $k$th iteration:
\begin{itemize}
\item[Case 1:]$\SH{k-1}$ is nonempty; or
\item[Case 2:]$\SH{k-1}$ is empty and $\LO{k-1}$ is nonempty.
\end{itemize}
Case 1 is considered in Section~\ref{sec:case1} and Case 2 is considered in Section~\ref{sec:case2}.

  \subsection{Suppose Case 1 is satisfied at the start of iteration $k$}\label{sec:case1} That is, suppose that $\SH{k-1}$ is nonempty. Then the $k$th iteration is an iteration of Loop 1. Consider the following four conditional expressions:
  \begin{itemize}
  \item[Case 1a:] $\max(\LO{k-1})-\heia{k-1}{\thief{k-1}}=\heia{k-1}{\vic{k-1}}-\max(\SH{k-1})+1$

    and $\heia{k-1}{\vic{k-1}}<\heia{k-1}{\vic{k-1}+1}$;
  \item[Case 1b:] $\max(\LO{k-1})-\thiefh{k-1}{k-1} > \vich{k-1}{k-1}-\max(\SH{k-1})$ and Case 1a is not satisfied;
  \item[Case 1c:] $\max(\LO{k-1})-\thiefh{k-1}{k-1} = \vich{k-1}{k-1}-\max(\SH{k-1})$; and
  \item[Case 1d:] $\max(\LO{k-1})-\thiefh{k-1}{k-1} < \vich{k-1}{k-1}-\max(\SH{k-1})$.
  \end{itemize}
  Note that $\pro{\tallimpliesshort}{k-1}$ implies that $\LO{k-1}$ is nonempty.  Hence each of the conditional expressions in Cases 1b, 1c, and 1d are well-defined.  To verify that the Case 1a conditional expression is well-defined, it suffices to show that $\vic{k-1}+1\le\term$ whenever
  \begin{equation}\label{uniquelabel} \max(\LO{k-1})-\heia{k-1}{\thief{k-1}}=\heia{k-1}{\vic{k-1}}-\max(\SH{k-1})+1.\end{equation}
  Suppose, for a contradiction, that $(\ref{uniquelabel})$ holds but $\vic{k-1}+1>\term$. 
  Then $\pro{\conditionx}{k-1}$ implies that $\vic{k-1}=\term$.  Hence
\[\begin{array}{rll}
    n &=\sum_{\{i \mid i\in\Index{k-1}, i\notin [\vic{k-1},\term],i\neq\thief{k-1}\}}\heia{k-1}{i}+\heia{k-1}{\thief{k-1}}+\heia{k-1}{\vic{k-1}} \\
      &=\partsum(\la{}-(\LO{k-1}+\SH{k-1}))+\heia{k-1}{\thief{k-1}}+\heia{k-1}{\vic{k-1}} & \text{(by $\pro{\converge}{k-1}$)}\\
      &<\partsum(\la{}-(\LO{k-1}+\SH{k-1}))+\max(\LO{k-1})+\max(\SH{k-1}) & \text{(by (\ref{uniquelabel}))}\\
      &\le n,
  \end{array}\]
a contradiction.  Hence $\vic{k-1}+1\le\term$ if (\ref{uniquelabel}) is satisfied, which implies that the conditional expression for Case 1a is well-defined.

It is easily verified that the conditional expressions for Cases 1a--d are mutually exclusive and exhaustive.  A different set of instructions executes for each of the four cases.  We now prove the inductive step for Case 1a; the proofs for Cases 1b--d are similar and thus are omitted.

\subsubsection{Suppose that Case 1a is satisfied at the start of iteration $k$.} As discussed above, it follows that $\vic{k-1}+1\le\term$. Since $\heia{k-1}{\vic{k-1}}<\heia{k-1}{\vic{k-1}+1}$, it follows (by $\pro{\conditionx}{k-1}$) that $\heia{k-1}{\vic{k-1}}=\flo{n/r}$, $\heia{k-1}{\vic{k-1}+1}=\cei{n/r}$, and $\flo{n/r}\neq\cei{n/r}$.

We walk through the execution of the Case 1a instructions (i.e., lines \ref{alg:main:case1astart}--\ref{alg:main:case1aend}). The graph $\gr{k}$ is formed by grafting $\heia{k-1}{\vic{k-1}+1}-\max(\SH{k-1})$ vertices in $\gr{k-1}$ from column $\vic{k-1}+1$ to column $\thief{k-1}$. This grafting operation is well-defined since $\gr{k-1}$ is properly downward and column $\vic{k-1}+1$ is a downward path in $\gr{k-1}$ (by $\pro{\conditionx}{k-1}$). Finally, multisets and column pointers are reassigned---resulting in $\SH{k}=\SH{k-1}-\{\max(\SH{k-1})\}$, $\LO{k}=\LO{k-1}-\{\max(\LO{k-1})\}$, $\thief{k}=\vic{k-1}$, and $\vic{k}=\vic{k-1}+2$---and iteration $k$ concludes. Since each of the expressions evaluated and instructions executed during iteration $k$ are well-defined, $\pro{\welldef}{k}$ follows. Proposition $\pro{\multisetsdecrease}{k}$ is clear.

  The remainder of the proof for Case 1a largely relies on Lemma~\ref{graftlemma}.  By $\pro{\heightgood}{k-1}$, it follows that $\thiefh{k-1}{k-1} > \vich{k-1}{k-1} - (\vich{k-1}{k-1}-\max(\SH{k-1}))$. Thus Lemma~\ref{graftlemma}(\ref{graftlemmapropdw}) implies $\pro{\propdw}{k}$. It is straight-forward to verify $\pro{\conditionx}{k}$. Proposition $\pro{\heightgood}{k}$ follows immediately since $\thief{k}=\vic{k-1}\in\Index{k}$ and $\thiefh{k}{k}=\flo{n/r}$.

  Proposition $\pro{\modify}{k}$ follows from the combination of Lemma~\ref{graftlemma}(\ref{graftlemmamodify}) and the fact that
  \[\begin{array}{rll}
      \ordg{k-1}(\thief{k-1},\thiefh{k-1}{k-1})&<\ordg{k-1}(\vic{k-1},\max(\SH{k-1})+1) & (\text{by }\pro{\ordinalgood}{k-1})\\
      &<\ordg{k-1}(\vic{k-1}+1,\max(\SH{k-1})+1) & (\text{by }\pro{\conditionx}{k-1})\\
      &=\ordg{k-1}(\vic{k-1},\vich{k-1}{k-1} \\
      &\hspace{1cm}- (\vich{k-1}{k-1} - \max(\SH{k-1}))).
    \end{array}\]
Proposition $\pro{\converge}{k}$ follows since
  \[\begin{array}{rll}
      &\{\heia{k}{i} \mid i\in\Index{k},i\notin[\vic{k},\term],i\neq\thief{k}\} \\
      &= \{\heia{k-1}{i} \mid i\in\Index{k-1},i\notin[\vic{k-1},\term],i\neq\thief{k-1}\}+\{\thiefh{k}{k-1},\vich{k}{k-1}\} \\
      &=\la{}-(\LO{k-1}+\SH{k-1})+\{\max(\SH{k-1}),\max(\LO{k-1})\} \\
      &=\la{}-((\LO{k-1}-\max(\LO{k-1}))+(\SH{k-1}-\max(\SH{k-1}))) \\
      &=\la{}-(\LO{k}+\SH{k}),\end{array}\]
  with the second equality following by $\pro{\converge}{k-1}$. To prove $\pro{\pointers}{k}$, it suffices to show $\vic{k}\le\term$ if $\LO{k}$ is nonempty.  Suppose, for a contradiction, that $\LO{k}$ is nonempty and $\vic{k}>\term$.  Then
  \[\begin{array}{rll}
      n & = \sum_{i\in\Index{k}}\heia{k}{i} \\
        & = \partsum(\la{}-(\LO{k}+\SH{k})) + \thiefh{k}{k} &(\text{by }\pro{\converge}{k}) \\
        & \le n - \max(\LO{k}) + \flo{n/r} &\text{(since $\LO{k}$ is nonempty)} \\
        & < n &\text{(since $\max(\LO{k})>\flo{n/r}$)}.
      \end{array}\]
    This is a contradiction, so $\pro{\pointers}{k}$ follows.  Then $\pro{\ordinalgood}{k}$ follows from Lemma~\ref{graftlemma} and $\pro{\conditionx}{k-1}$.

    Finally, we prove $\pro{\tallimpliesshort}{k}$. Suppose that $\SH{k}$ is nonempty.  Suppose, for a proof by contradiction, that $\LO{k}$ is empty.  Then
  \[\begin{array}{rll} |\la{}-(\LO{k}+\SH{k})|+|\SH{k}|
                       & =|\la{}|=r=|\Index{k}| \\
                       & =|\{i \mid i\in\Index{k},i\notin[\vic{k},\term],i\neq\thief{k}\}|+|\{\thief{k}\}\cup[\vic{k},\term]| \\
                       & = |\{\heia{k}{i} \mid i\in\Index{k},i\notin[\vic{k},\term],i\neq\thief{k}\}|+|\{\thief{k}\}\cup[\vic{k},\term]| \\
                       & = |\la{}-(\LO{k}+\SH{k})|+|\{\thief{k}\}\cup[\vic{k},\term]|.
    \end{array}\]
  It follows that
  \begin{equation}\label{eqone}|\{\thief{k}\}\cup[\vic{k},\term]|=|\SH{k}|.\end{equation}
  Moreover, since
  \[\begin{array}{rll}
       \partsum(\la{}-(\LO{k}+\SH{k})) + \partsum(\LO{k}+\SH{k})
      &=\partsum(\la{}) = n = \sum_{i\in\Index{k}}\heia{k}{i}\\
      &=\sum_{i\in\Index{k},i\notin[\vic{k},\term],i\neq\thief{k}}\heia{k}{i}+\thiefh{k}{k}+\sum_{i=\vic{k}}^{\term}\heia{k}{i} \\
      & =\partsum(\la{}-(\LO{k}+\SH{k}))+\thiefh{k}{k}+\sum_{i=\vic{k}}^{\term}\heia{k}{i},
    \end{array}\]
  it follows that
  \begin{equation}\label{eqtwo}\partsum(\SH{k}) = \thiefh{k}{k}+\sum_{i=\vic{k}}^{\term}\heia{k}{i}.\end{equation}
  Note that $\heia{k}{\thief{k}}>\max(\SH{k})$ by $\pro{\heightgood}{k}$.  Also $\heia{k}{i}>\max(\SH{k})$ for all $i\in[\vic{k},\term]$.  Hence
  \[\begin{array}{rll}
      \max(\SH{k})\cdot |\SH{k}| & < \thiefh{k}{k} + \sum_{i=\vic{k}}^{\term}\heia{k}{i} &(\text{by (\ref{eqone})})\\
                                     & = \partsum(\SH{k}) &(\text{by (\ref{eqtwo})}) \\
      & \le\max(\SH{k})\cdot |\SH{k}|,
    \end{array}\]
a contradiction.  Proposition $\pro{\tallimpliesshort}{k}$ follows.

\subsection{Suppose Case 2 is satisfied at the start of iteration $k$.}\label{sec:case2} That is, suppose that $\SH{k-1}$ is empty and $\LO{k-1}$ is nonempty. Then the $k$th iteration of a primary loop is an iteration of Loop 2. Three of the propositional functions have simpler, logically equivalent formulations for this case:
  \begin{itemize}
  \item[$\lro{\multisetsdecrease}{k}:$] $\SH{k}$ is empty and $\LO{k}\subsetneq\LO{k-1}$.
  \item[$\lro{\converge}{k}:$] $\{\heia{k}{i} \mid i\in\Index{k}, i\notin [\vic{k},\term],i\neq\thief{k}\}=\la{}-\LO{k}$.
  \item[$\lro{\ordinalgood}{k}:$] If $\LO{k}$ is nonempty, then $\ordg{k}(\thief{k},\thiefh{k}{k})<\ordg{k}(\vic{k},\max\{1,\vich{k}{k}-(\max(\LO{k})-\thiefh{k}{k})+1\})$.
  \end{itemize}
  Observe that if $\SH{k}$ is empty, then $\lro{\heightgood}{k}$ and $\lro{\tallimpliesshort}{k}$ follow trivially.

The first step in executing the instruction set for Loop 2 is the execution of the \emph{Little Loop} (lines \ref{alg:main:littleloopstart}--\ref{alg:main:littleloopend}).  Let $\lgr{k-1}{\ell}$ (resp. $\lvic{k-1}{\ell}$) denote the state of the graph (resp. column pointer) at the end of the $\ell$th iteration of the Little Loop during the $k$th iteration, with $\lgr{k-1}{0}$ (resp. $\lvic{k-1}{0}$) denoting the initial state $\gr{k-1}$ (resp. $\vic{k-1}$).  Define the following propositional functions:
  \begin{itemize}
  \item[$\lpro{\lwelldef}{k}{\ell}:$] Each instruction is well-defined in iteration $\ell$ of the Little Loop during iteration $k$.
  \item[$\lpro{\lmodify}{k}{\ell}:$] $\matrixfxn(\lgr{k-1}{\ell})= \matrixfxn(\lgr{k-1}{\ell-1})+X$ for some strictly upper triangular matrix $X$.
  \item[$\lpro{\sciongrows}{k}{\ell}:$] $\lheia{k-1}{\ell}{\thief{k-1}}>\lheia{k-1}{\ell-1}{\thief{k-1}}$.
  \item[$\lpro{\lpropdw}{k}{\ell}:$] $\lgr{k-1}{l}$ is properly downward.
  \item[$\lpro{\lconverge}{k}{\ell}:$] $\{\lheia{k-1}{\ell}{i} \mid i\in\lIndex{k-1}{\ell},i\notin[\lvic{k-1}{\ell},\term],i\neq\thief{k-1}\}=\la{}-\LO{k-1}$.
  \item[$\lpro{\lconditionx}{k}{\ell}:$] If $i$ satisfies $\lvic{k-1}{\ell}\le i\le\term$, then:
    \begin{enumerate}
    \item $i\in\lIndex{k-1}{\ell}$;
    \item column $i$ is a downward path;
    \item $\lheia{k-1}{\ell}{i}=\flo{n/r}$ or $\lheia{k-1}{\ell}{i}=\cei{n/r}$;
    \item if $i<j\le\term$, then $\lheia{k-1}{\ell}{i}\le\lheia{k-1}{\ell}{j}$; and
    \item if $\LO{k}$ is nonempty, then $\lheia{k-1}{\ell}{i}<\max(\LO{k})$ for all $i\in[\lvic{k-1}{\ell},\term]$.
    \end{enumerate}
    The set of vertices $v$ with $\lvic{k-1}{\ell}\le x(v)\le\term$ are ordered by type-writer traversal.
  \item[$\lpro{\lpointers}{k}{\ell}:$] If $\lheia{k-1}{\ell}{\thief{k-1}}<\max(\LO{k-1})$, then $\lvic{k-1}{\ell}\le\term$.
  \item[$\lpro{\lordinalgood}{k}{\ell}:$] $\lordg{k-1}{\ell}(\thief{k-1},\lthiefh{k-1}{\ell}{k-1})$ \newline $<\lordg{k-1}{\ell}(\lvic{k-1}{\ell},\max\{1,\lvich{k-1}{\ell}{k-1}{\ell}-(\max(\LO{k})-\lthiefh{k-1}{\ell}{k-1})+1\})$.
  \end{itemize}
  We prove $\lpro{i}{k}{0}$ for all $i\in[4,8]$, and $\lpro{i}{k}{\ell}$ for all $i\in [1,8]$ and for all $\ell$ such that $\max(\LO{k-1})-\lheia{k-1}{\ell}{\thief{k-1}}>\cei{n/r}$. Propositions $\lpro{i}{k}{0}$ for $i\in[4,8]$ follow immediately from $\lro{i}{k-1}$ for $i\in[4,8]$, which are assumed as part of our inductive hypothesis. Let $\ell>0$ satisfy $\max(\LO{k-1})-\lheia{k-1}{\ell}{\thief{k-1}}>\cei{n/r}$, and suppose for an inductive hypothesis that $\lpro{i}{k}{\ell-1}$ holds for all $i\in [1,8]$.  We step through the two instructions (lines \ref{alg:main:littleloopfirst} and \ref{alg:main:littleloopend}) in the $\ell$th iteration of the Little Loop.  First, column $\lvic{k-1}{\ell-1}$ is grafted to column $\thief{k-1}$, to form the graph $\lgr{k-1}{\ell}$.  Since $\lgr{k-1}{\ell-1}$ is properly downward by $\lpro{\welldef}{k-1}{\ell-1}$ and $\thief{k-1}\neq\lvic{k-1}{\ell-1}$, it follows that the grafting transformation is well-defined. Next, $\lvic{k-1}{\ell}$ is set to $\lvic{k-1}{\ell-1}+1$, and iteration $\ell$ of the Little Loop concludes.

  Propositions $\lpro{i}{k}{\ell}$ for each $i\in[1,6]$ follow immediately, mostly as direct consequences of Lemma~\ref{graftlemma}.  To prove $\lpro{\lordinalgood}{k}{\ell}$, observe that Lemma~\ref{graftlemma}(\ref{graftlemmaordinal}) implies $\lordg{k-1}{\ell}(\thief{k-1},\lthiefh{k-1}{\ell}{k-1})=\lordg{k-1}{\ell-1}(\lvic{k-1}{\ell-1},\lheia{k-1}{\ell-1}{\lvic{k-1}{\ell-1}})$. But this is less than $\lordg{k-1}{\ell}(\lvic{k-1}{\ell},\max\{1,\lvich{k-1}{\ell}{k-1}{\ell}-(\max(\LO{k})-\lthiefh{k-1}{\ell}{k-1})+1\})$. Since $\lvic{k-1}{\ell-1}<\lvic{k-1}{\ell}$ and $\lheia{k-1}{\ell-1}{\lvic{k-1}{\ell-1}}\ge \lvich{k-1}{\ell}{k-1}{\ell}-(\max(\LO{k})-\lthiefh{k-1}{\ell}{k-1})$, the claim follows.  To prove $\lpro{\lpointers}{k}{\ell}$, suppose, for a contradiction, that $\lvic{k-1}{\ell}>\term$.  Then
\[\begin{array}{rll}
    n-\max(\LO{k-1})
    & < n-\lheia{k-1}{\ell}{\thief{k-1}} \\
    & = \sum_{i\in\lIndex{k-1}{\ell},i\neq\thief{k-1}}\lheia{k-1}{\ell}{i} \\
    & = \sum_{i\in\lIndex{k-1}{\ell},i\notin[\lvic{k-1}{\ell},\term],i\neq\thief{k-1}}\lheia{k-1}{\ell}{i} \\
    & = \partsum(\la{}-\LO{k-1}) &\text{(by $\lpro{\lconverge}{k}{\ell}$)} \\
    & \le n-\max(\LO{k-1}),
\end{array}\]
a contradiction. This concludes the proof of $\lpro{i}{k}{\ell}$ for all $i\in [1,8]$ and for all $\ell$ such that $\max(\LO{k-1})-\lheia{k-1}{\ell}{\thief{k-1}}>\cei{n/r}$.

Since $\lpro{\welldef}{k}{\ell}$ and $\lpro{\sciongrows}{k}{\ell}$ holds for all $\ell$ such that $\max(\LO{k-1})-\lheia{k-1}{\ell}{\thief{k-1}}>\cei{n/r}$, it follows that each iteration of the Little Loop is well-defined, and that the Little Loop eventually terminates.  Define $\lastlil$ to be $\max\{\ell \mid \max(\LO{k-1})-\lheia{k-1}{\ell-1}{\thief{k-1}}>\cei{n/r}\}$ if $\max(\LO{k-1})-\thiefh{k-1}{k-1}>\cei{n/r}$---i.e., if the Little Loop iterated at least once---and zero otherwise.

Exactly one of the following cases holds:
\begin{itemize}
\item[Case 2a:]$\max(\LO{k-1})-\thiefh{k-1}{k-1}>\lheia{k-1}{\lastlil}{\lvic{k-1}{\lastlil}}$ and $\lheia{k-1}{\lastlil}{\lvic{k-1}{\lastlil}+1}=\cei{n/r}$;
\item[Case 2b:]$\max(\LO{k-1})-\thiefh{k-1}{k-1}>\lheia{k-1}{\lastlil}{\lvic{k-1}{\lastlil}}$ and $\lheia{k-1}{\lastlil}{\lvic{k-1}{\lastlil}+1}=\flo{n/r}$;
\item[Case 2c:]$\max(\LO{k-1})-\thiefh{k-1}{k-1}=\lheia{k-1}{\lastlil}{\lvic{k-1}{\lastlil}}$; and
\item[Case 2d:]$\max(\LO{k-1})-\thiefh{k-1}{k-1}<\lheia{k-1}{\lastlil}{\lvic{k-1}{\lastlil}}$.
\end{itemize}

To verify that the conditional expressions for Cases 2a and 2b are well-defined, we show that $\lvic{k-1}{\lastlil}+1\in\lIndex{k-1}{\lastlil}$ if 
\begin{equation}\label{distinctivelabel}\max(\LO{k-1})-\thiefh{k-1}{k-1}>\lheia{k-1}{\lastlil}{\lvic{k-1}{\lastlil}}.\end{equation}  Assume that (\ref{distinctivelabel}) holds.  Suppose, for a contradiction, that $\lvic{k-1}{\lastlil}+1>\term$.  Then $\lpro{\lconditionx}{k-1}{\lastlil}$ implies that $\lvic{k-1}{\lastlil}=\term$.  Hence
\[\begin{array}{rll}
    n &=\sum_{i\in\lIndex{k-1}{\lastlil},i\notin[\lvic{k-1}{\lastlil},\term],i\neq\thief{k-1}}\lheia{k-1}{\lastlil}{i} \\
      &\hspace{1cm}+\lheia{k-1}{\lastlil}{\lvic{k-1}{\lastlil}} + \lheia{k-1}{\lastlil}{\thief{k-1}} \\
      &=\partsum(\la{}-\LO{k-1})+\lheia{k-1}{\lastlil}{\lvic{k-1}{\lastlil}} + \lheia{k-1}{\lastlil}{\thief{k-1}} & \text{(by $\lpro{\lconverge}{k}{\lastlil}$)}\\
      &<\partsum(\la{}-\LO{k-1})+\max(\LO{k-1}) & \text{(by (\ref{distinctivelabel}))}\\
      &\le n,
  \end{array}\]
a contradiction. The claim follows.

A different set of instructions executes for each case. We prove the inductive step for Case 2a in Section~\ref{twoa}; Cases 2b--d can be proved similarly.

\subsubsection{Suppose Case 2a holds.}\label{twoa}  Then $\lheia{k-1}{\lastlil}{\lvic{k-1}{\lastlil}}=\flo{n/r}$, $\lheia{k-1}{\lastlil}{\lvic{k-1}{\lastlil}+1}=\cei{n/r}$, and $\max(\LO{k-1})-\thiefh{k-1}{k-1}=\cei{n/r}$.  The Case 2a instruction set is executed.  The graph $\gr{k}$ is formed by grafting column $\lvic{k-1}{\lastlil}+1$ in $\lgr{k-1}{\lastlil}$ to $\thief{k-1}$. Recall from the discussion immediately preceding this section that $\lvic{k-1}{\lastlil}+1\le\term$. Hence column $\lvic{k-1}{\lastlil}+1$ is a downward path by $\lpro{\lconditionx}{k-1}{\lastlil}$.  Since $\lgr{k-1}{\lastlil}$ is properly downward by $\lpro{\lpropdw}{k-1}{\lastlil}$, and since column $\lvic{k-1}{\lastlil}+1$ is a downward path, it follows that the grafting operation is well-defined.  Lemma~\ref{graftlemma}(\ref{graftlemmaheight}) implies that $\thiefh{k}{k-1}=\max(\LO{k-1})$. The next instructions set $\thief{k}=\lvic{k-1}{\lastlil}$, $\vic{k}=\lvic{k-1}{\lastlil}+1$, and $\LO{k}=\LO{k-1}-\{\max(\LO{k-1})\}$.

Proposition $\lro{\welldef}{k}$ follows since each of the expressions and instructions above are well-defined, and since $\lpro{\lwelldef}{\ell}{k}$ holds for all $\ell$ such that $\max(\LO{k-1})-\lheia{k-1}{\ell}{\thief{k-1}}>\cei{n/r}$.  Proposition $\lro{\multisetsdecrease}{k}$ is immediately clear. Proposition $\lro{\propdw}{k}$ follows from Lemma~\ref{graftlemma}(\ref{graftlemmapropdw}). Since $\lpro{\lmodify}{k}{\ell}$ holds for all $\ell$ such that $\max(\LO{k-1})-\lheia{k-1}{\ell}{\thief{k-1}}>\cei{n/r}$, it follows that $\lro{\modify}{k}$.  It is straight-forward to verify that $\lro{\conditionx}{k}$ follows from Lemma~\ref{graftlemma} and $\lpro{\lconditionx}{k}{\lastlil}$.  Proposition $\lro{\ordinalgood}{k}$ is a straight-forward consequence of $\lpro{\lconditionx}{k}{\lastlil}$ and Lemma~\ref{graftlemma}(\ref{graftlemmaordinal}).

To prove $\lro{\converge}{k}$, first observe that Lemma~\ref{graftlemma}(\ref{graftlemmadomain}) implies that
\[\begin{array}{rll}
    & \{i \mid i\in\Index{k},i\notin[\vic{k},\term],i\neq\thief{k}\} \\
    &=\{i \mid i\in\lIndex{k-1}{\lastlil}-\{\lvic{k-1}{\lastlil}+1\},i\notin[\lvic{k-1}{\lastlil}+2,\term],i\neq\lvic{k-1}{\lastlil}\} \\
    &=\{i \mid i\in\lIndex{k-1}{\lastlil},i\notin[\lvic{k-1}{\lastlil},\term]\}. \\
  \end{array}\]
Hence
\[\begin{array}{rll}
    & \{\heia{k}{i} \mid i\in\Index{k},i\notin[\vic{k},\term],i\neq\thief{k}\} \\
    &=\{\heia{k}{i} \mid i\in\lIndex{k-1}{\lastlil},i\notin[\lvic{k-1}{\lastlil},\term]\} \\
    &=\{\heia{k}{i} \mid i\in\lIndex{k-1}{\lastlil},i\notin[\lvic{k-1}{\lastlil},\term],i\neq\thief{k-1}\}+\{\heia{k}{\thief{k-1}}\} \\
    &=(\la{}-\LO{k-1})+\{\max(\LO{k-1})\} \hspace{1cm}\text{(by $\lpro{\lconverge}{k}{\lastlil}$)}\\
    &=\la{}-\LO{k}.
  \end{array}\]
Proposition $\lro{\converge}{k}$ follows.

Finally, we prove $\lro{\pointers}{k}$.  Assume that $\LO{k}$ is nonempty.  Suppose, for a contradiction, that $\vic{k}>\term$.  Then
\[\begin{array}{rll}
    n-\flo{n/r}
    &=\sum_{i\in\Index{k}}\heia{k}{i}-\thiefh{k}{k} \\
    &=\sum_{i\in\Index{k},i\notin[\vic{k},\term],i\neq\thief{k}}\heia{k}{i} \\
    &=\partsum(\la{}-\LO{k}) & \text{(by $\lro{\converge}{k}$)} \\
    &\le n-\max(\LO{k}),
  \end{array}\]
a contradiction since $\max(\LO{k})>\flo{n/r}$.  Proposition $\lro{\pointers}{k}$ follows.

\bibliography{references}

\providecommand{\bysame}{\leavevmode\hbox to3em{\hrulefill}\thinspace}
\providecommand{\MR}{\relax\ifhmode\unskip\space\fi MR }
\providecommand{\MRhref}[2]{%
  \href{http://www.ams.org/mathscinet-getitem?mr=#1}{#2}
}
\providecommand{\href}[2]{#2}
\begin{thebibliography}{10}

\bibitem{ball1990eigenvalues}
J.~A. Ball, I.~Gohberg, L.~Rodman, and T.~Shalom, \emph{On the eigenvalues of
  matrices with given upper triangular part}, Integr. Equat. Oper. Th.
  \textbf{13} (1990), no.~4, 488--497.

\bibitem{BeKr}
A.~Berman and M.~Krupnik, \emph{Spectrum preserving lower triangular
  completions-the nonnegative nilpotent case}, Electron. J. Linear Algebra
  \textbf{2} (1997), 9--16.

\bibitem{Boa}
P.~Boalch, \emph{Symplectic manifolds and isomonodromic deformations}, Adv.
  Math. \textbf{163} (2001), 137--205.

\bibitem{BrSa2}
C.~Bremer and D.~S. Sage, \emph{Isomonodromic deformations of connections with
  singularities of parahoric formal type}, Comm. Math. Phys. \textbf{313}
  (2012), 175--208.

\bibitem{BrSa5}
\bysame, \emph{Flat ${\G}$-bundles and regular strata for reductive groups},
  arXiv:1309.6060, 2013.

\bibitem{BrSa1}
\bysame, \emph{Moduli spaces of irregular singular connections}, Int. Math.
  Res. Not. IMRN (2013), 1800--1872.

\bibitem{BrSa3}
\bysame, \emph{A theory of minimal {$K$}-types for flat {$G$}-bundles}, Int.
  Math. Res. Not. IMRN (2018), 3507--3555.

\bibitem{cm93}
D.~H. Collingwood and W.~M. McGovern, \emph{Nilpotent orbits in semisimple
  {L}ie algebras}, Van Nostrand Reinhold Mathematics Series, Van Nostrand
  Reinhold Co., New York, 1993.

\bibitem{CB03}
W.~Crawley-Boevey, \emph{On matrices in prescribed conjugacy classes with no
  common invariant subspace and sum zero}, Duke Math. J. \textbf{118} (2003),
  339--352.

\bibitem{FrGr}
E.~Frenkel and B.~Gross, \emph{A rigid irregular connection on the projective
  line}, Ann. of Math. (2) \textbf{170} (2009), 1469--1512.

\bibitem{gohberg1992bounds}
I.~Gohberg, L.~Rodman, T.~Shalom, and H.~J. Woerdeman, \emph{Bounds for
  eigenvalues and singular values of matrix completions}, Linear and
  Multilinear Algebra \textbf{33} (1992), no.~3--4, 233--249.

\bibitem{gurvits1992controllability}
L.~Gurvits, L.~Rodman, and T.~Shalom, \emph{Controllability and completion of
  partial upper triangular matrices over rings}, Linear Algebra Appl.
  \textbf{172} (1992), 135--149.

\bibitem{numpy}
C.~R. Harris, K.~J. Millman, S.~J. van~der Walt, R.~Gommers, P.~Virtanen,
  D.~Cournapeau, E.~Wieser, J.~Taylor, J.~Berg, N.~J. Smith, R.~Kern, M.~Picus,
  S.~Hoyer, M.~H. van Kerkwijk, M.~Brett, A.~Haldane, J.~F. del R{\'{i}}o,
  M.~Wiebe, P.~Peterson, P.~G{\'{e}}rard-Marchant, K.~Sheppard, T.~Reddy,
  W.~Weckesser, H.~Abbasi, C.~Gohlke, and T.~E. Oliphant, \emph{Array
  programming with {NumPy}}, Nature \textbf{585} (2020), no.~7825, 357--362.

\bibitem{Hiroe}
K.~Hiroe, \emph{Linear differential equations on the {R}iemann sphere and
  representations of quivers}, Duke Math. J. \textbf{166} (2017), 855--935.

\bibitem{HirYam}
K.~Hiroe and D.~Yamakawa, \emph{Moduli spaces of meromorphic connections and
  quiver varieties}, Adv. Math. \textbf{266} (2014), 120--151.

\bibitem{hogben}
L.~Hogben and A.~Wangsness, \emph{Matrix completion problems}, Handbook of
  Linear Algebra (L.~Hogben, ed.), Chapman \& Hall/CRC Press, 1st ed., 2007.

\bibitem{KS1}
M.~Kamgarpour and D.~S. Sage, \emph{A geometric analogue of a conjecture of
  {G}ross and {R}eeder}, Amer. J. Math. \textbf{141} (2019), 1457--1476.

\bibitem{KS2}
\bysame, \emph{Rigid connections on {$\mathbb{P}^1$} via the {B}ruhat--{T}its
  building}, Proc. Lond. Math. Soc. \textbf{122} (2021), 359--376.

\bibitem{KatzKloosterman}
N.~M. Katz, \emph{Gauss sums, {K}loosterman sums, and monodromy groups.}, Ann.
  Math. Stud., vol. 116, Princeton University Press, 1988.

\bibitem{KatzExponential}
\bysame, \emph{Exponential sums and differential equations.}, Ann. Math. Stud.,
  vol. 124, Princeton University Press, 1990.

\bibitem{krupnik97}
M.~Krupnik, \emph{Jordan structures of upper equivalent matrices}, Linear
  Algebra Appl. \textbf{261} (1997), 167--172.

\bibitem{KrLe}
M.~Krupnik and A.~Leibman, \emph{Jordan structures of strictly lower triangular
  completions of nilpotent matrices}, Integr. Equat. Oper. Th. \textbf{23}
  (1995), 459--471.

\bibitem{KrRo}
M.~Krupnik and L.~Rodman, \emph{Completions of partial jordan and hessenberg
  matrices}, Linear Algebra Appl. \textbf{212--213} (1994), 267--287.

\bibitem{KLMNS}
M.~C. Kulkarni, N.~Livesay, J.~P. Matherne, B.~Nguyen, and D.~S. Sage,
  \emph{{The Deligne--Simpson problem for connections on $\mathbb{G}_m$ with a
  maximally ramified singularity}}, Adv. Math. \textbf{408} (2022), 108596.

\bibitem{RODMAN1992221}
L.~Rodman and T.~Shalom, \emph{Jordan forms of completions of partial upper
  triangular matrices}, Linear Algebra Appl. \textbf{168} (1992), 221--249.

\bibitem{Sa17}
D.~S. Sage, \emph{Regular strata and moduli spaces of irregular singular
  connections}, New trends in analysis and interdisciplinary applications,
  Trends Math. Res. Perspect., Birkh\"{a}user/Springer, Cham, 2017, pp.~69--75.

\bibitem{sage2022meromorphic}
\bysame, \emph{Meromorphic connections on the projective line with specified
  local behavior}, arXiv:2212.14108, 2022.

\bibitem{sagemath}
{The Sage Developers}, \emph{{SageMath, the Sage Mathematics Software System
  (Version 8.1)}}, 2017, {\tt https://www.sagemath.org}.

\bibitem{Was}
W.~Wasow, \emph{Asymptotic expansions for ordinary differential equations},
  Wiley Interscience, New York, 1976.

\bibitem{woerdeman1989minimal}
H.~J. Woerdeman, \emph{Minimal rank completions for block matrices}, Linear
  Algebra Appl. \textbf{121} (1989), 105--122.

\end{thebibliography}
\bibliographystyle{amsplain}

\end{document}